\def\blfootnote{\gdef\@thefnmark{}\@footnotetext}
\newtheorem{theorem}{Theorem}[section]
\newtheorem{lemma}[theorem]{Lemma}
\newtheorem{question}[theorem]{Question}
\theoremstyle{definition}
\newtheorem{remark}[theorem]{Remark}
\theoremstyle{remark}
\numberwithin{equation}{section}
\def\Mod{{\rm Mod}}
\def\HMod{{\rm HMod}}
\begin{document}
 \blfootnote{\textup{2000} \textit{Mathematics Subject Classification}:
57M99, 20F38}
\blfootnote {\textit{Keywords}: Symplectic $4$-manifolds,
Mapping class groups, Lefschetz fibrations, exotic manifolds}

\newenvironment{prooff}{\medskip \par \noindent {\it Proof}\ }{\hfill
$\square$ \medskip \par}
    \def\sqr#1#2{{\vcenter{\hrule height.#2pt
        \hbox{\vrule width.#2pt height#1pt \kern#1pt
            \vrule width.#2pt}\hrule height.#2pt}}}
    \def\square{\mathchoice\sqr67\sqr67\sqr{2.1}6\sqr{1.5}6}
\def\pf#1{\medskip \par \noindent {\it #1.}\ }
\def\endpf{\hfill $\square$ \medskip \par}
\def\demo#1{\medskip \par \noindent {\it #1.}\ }
\def\enddemo{\medskip \par}
\def\qed{~\hfill$\square$}

 \title[Small Lefschetz Fibrations on Simply-Connected $4$-Manifolds]
{Small Lefschetz Fibrations on Simply-Connected $4$-Manifolds}
\author[T{\"{u}}l\.{i}n Altun{\"{o}}z]{T{\"{u}}l\.{i}n Altun{\"{o}}z}
 \address{Department of Mathematics, Middle East Technical University,
  Ankara, Turkey}
  \email{atulin@metu.edu.tr}
\date{\today}
\maketitle
 \begin{abstract} We consider simply-connected $4$-manifolds admitting Lefschetz fibrations over the $2$-sphere. We explicitly construct nonhyperelliptic and hyperelliptic Lefschetz fibrations of genus $4$ on simply-connected $4$-manifolds which are exotic symplectic $4$-manifolds in the homeomorphism classes of $\mathbb{C} P^{2}\#8\overline{\mathbb{C} P^{2}}$ and $\mathbb{C} P^{2}\#9\overline{\mathbb{C} P^{2}}$, respectively. From these, we provide upper bounds for the minimal number of singular fibers of such fibrations. In addition, we prove that this number is equal to $18$ for $g=3$ when such fibrations are hyperelliptic. Moreover, we discuss these numbers for higher genera.
\end{abstract}

\section{Introduction}
\label{Sec:1}

Due to the pioneering works of Donaldson~\cite{d2} and Gomp~\cite{gs}, Lefschetz pencils and Lefschetz fibrations play an important role in studying $4$-manifold topology. Donaldson proved that every symplectic $4$-manifold, up to blow-ups, corresponds to a Lefschetz fibration with a finite number of singularities of a prescribed type, which provides a way to study combinatorially via a positive factorization of its monodromy if exists. Conversely, Gompf showed that any $4$ manifold admitting a genus-$g$ Lefschetz fibration is a symplectic $4$-manifold if $g\geq2$.

Every nontrivial Lefschetz fibration admits certain singular fibers associated to its monodromy. The number of its singular fibers  provides us important information about its total spaces such as the Euler characteristic, the signature, and so on. Since it has been known that the number of singular fibers in a Lefschetz fibration cannot be arbitrary, determining the minimal number of singular fibers in a Lefschetz fibration is one of the interesting problem to be investigated.

Let $N(g,h)$ be the minimal number of singular fibers in all nontrivial relatively minimal genus-$g$ Lefschetz fibrations of over the oriented closed surface of genus-$h$. For $h\geq1$, the exact value of the number $N(g,h)$ is almost known (except $N(g,1)$ for $g\geq3$ and $N_{2,2}$)~\cite{Ha,Ko,Ks,M,StY}. For $h=0$, it is known that $N(2,0)=7$ by Xiao's construction~\cite{X} and also by the existence of a relation among seven positive Dehn twists in the mapping class group of genus-$2$ surface with one boundary component obtained by Baykur and Korkmaz~\cite{bki}. However, the exact value of this number is not known for $h=0$ and $g\geq3$. The best known estimates for $N(g,0)$ : $N(g,0)\leq 2g+4$ if $g$ is even and $N(g,0)\leq 2g+10$ if $g$ is odd~\cite{Cadavid,dmp,k1}. If we consider hyperelliptic Lefschetz fibrations, i.e. their vanishing cycles are invariant under a hyperelliptic involution $\iota$ (see for instance Figure~\ref{TAB3}), then some results about this number are known. Let $M(g,h)$ denote the minimal number of singular fibers in all nontrivial genus-$g$ hyperelliptic Lefschetz fibrations over the $2$-sphere. Baykur and Korkmaz~\cite{bk1} constructed a hyperelliptic genus-$3$ Lefschetz fibration over the $2$-sphere with $12$ singular fibers and then they proved that $M(3,0)=12$. In~\cite{AT} the author proved the following result:\\
$(1)$ $M(4,0)=12$ and $M(6,0)=16$, \\
$(2)$ $M(8,0)=19$ or $20$ and $M(10,0)=23$ or $24$,\\
 $(3)$ $M(5,0)\geq15$, $M(7,0)\geq17$, and $M(9,0)\geq 24$. \\Moreover, when the total spaces of such fibrations are complex surfaces, she proved that it is equal to $2g+4$ if $g\geq4$ and even and it has a lower bound $2g+6$ if $g\geq7$ and odd. Thus, the exact value of the number $M(g,0)$ is not known (except for $g=3,4$ and $6$). Therefore, this question is also open for hyperelliptic Lefschetz fibrations.

Lefschetz fibration structures on various smooth $4$-manifolds with small numbers of singular fibers may provide us the existence of symplectic structures on $4$-manifolds in the homeomorphism classes of simply-connected $4$-manifolds with very small topology, which has been an interesting topic contains several construction techniques (e.g. ~\cite{a1,a3,bk,d1,f1,f2,f3,fmor,g1,kot,p1,p2,sz}). Recently, some authors have studied Lefschetz fibration structures to produce exotic $4$-manifolds (which are homeomorphic but not diffeomorphic to standard ones). Since it is natural to relate small  (as in small second homology) exotic $4$-manifolds to small (as in small number of positive Dehn twists) Lefschetz fibrations, it is of interest to find the minimal number of singular fibers in Lefschetz fibrations on simply-connected $4$-manifolds.

Let $N_g$ be the minimal number of singular fibers in all genus-$g$ Lefschetz fibrations on a simply-connected $4$-manifold over the $2$-sphere  having at least one singular fiber. It is known that the minimal number of singular fibers in all torus Lefschetz fibrations is $12$. One can conclude that $N_1=12$ by the existence of torus Lefschetz fibrations with $12$ singular fibers on the elliptic surface $E(1)=\mathbb{C} P^{2}\#9\overline{\mathbb{C} P^{2}}$. A genus-$2$ Lefschetz fibration with $14$ singular fibers on a symplectic $4$-manifold which is an exotic copy of $\mathbb{C} P^{2}\#7\overline{\mathbb{C} P^{2}}$ was constructed by Baykur and Korkmaz~\cite{bki}. By~\cite[Theorem $2$]{bki}, one can obtain that $N_2=14$.

Let us define $M_g$ as the minimal number of singular fibers in all hyperelliptic Lefschetz fibrations on a simply-connected $4$-manifolds over $\mathbb{S}^{2}$ having at least one singular fiber. By the same argument above, $M_1=12$ and $M_2=14$.

The purpose of the present paper is to estimate the numbers of $N_g$ and $M_g$. In this direction, in Section~\ref{S2}, we first give some preliminary information and results. In Section~\ref{S3}, we explicitly construct two genus-$4$ Lefscherz fibrations on simply-connected $4$-manifolds. To do this we first mention the positive factorizaton $W$ for a genus-$3$ hyperelliptic Lefschetz fibration given by Baykur~\cite{b1}. Then we construct a genus-$4$ nonhyperelliptic Lefschetz fibration $(X_1,f_1)$ using the Baykur's monodromy and the monodromy that gives the smallest genus-$2$ Lefschetz fibration. Here, we use the breeding technique to construct this positive factorization (see~\cite{bk1,b1,bh} for more applications of this technique). We also prove that the $4$-manifold $X_1$ is an exotic copy of $\mathbb{C} P^{2}\#8\overline{\mathbb{C} P^{2}}$ (Theorem~\ref{thmex4}). Similarly, we produce another monodromy which gives a genus-$4$ hyperelliptic Lefschetz fibration $(X_2,f_2)$ using the monodromy of generalized Matsumoto's fibration for $g=4$ and again the monodromy of the smallest genus-$2$ Lefschetz fibration. We prove that the $4$-manifold $X_2$ is an exotic $\mathbb{C} P^{2}\#9\overline{\mathbb{C} P^{2}}$ (Theorem~\ref{thmexh4}). In the last section, we examine the numbers $N_g$ and $M_g$ for $g\leq 4$. We first give a different proof for the result $N_2=M_2=14$ given in~\cite[Theorem $2$]{bki}  (Lemma~\ref{lemn2}). Moreover, we prove that $M_3=18$ (Lemma~\ref{lemn3}). Similarly, using the existence of genus-$4$ nonhyperelliptic and hyperelliptic Lefschetz fibrations $(X_1,f_1)$ and $(X_2,f_2)$ constructed in Section~\ref{S3}, we conclude that $N_4\leq 23$ and $M_4\leq 24$. We then give  better estimates for the number $M_4$ (Lemma~\ref{lemm4}). Finally, we discuss the numbers $N_g$ and $M_g$ for higher genus.

\section{Preliminaries}
\label{S2}
This section presents the necessary background and the known results used
in our proofs.
\subsection{Mapping Class Groups}
\  Let us denote a compact connected oriented smooth surface of genus $g$
with $n\geq 0$ boundary components by $\Sigma_{g}^{n}$. Let $\Mod_{g}^{n}$ denote the mapping class group of $\Sigma_{g}^{n}$, i.e., the group of isotopy classes of orientation-preserving self-diffeomorphisms of $\Sigma_{g}^{n}$ fixing all points on the
boundary. We assume all isotopies are identity on the boundary. When $n=0$, we will denote $\Mod_{g}^{n}$ and $\Sigma_{g}^{n}$ by $\Mod_g$ and $\Sigma_{g}$, respectively. Throughout the paper we do not distinguish a 
 diffeomorphism from its isotopy class. For the composition of two diffeomorphisms, we
use the functional notation; if $g$ and $h$ are two diffeomorphisms, then
the composition $gh$ means that we apply $h$  first and then $g$.
\indent

Now, let us remind the following basic properties of Dehn twists. Let $a$ and $b$ be 
simple closed curves on $\Sigma_{g}^{n}$ and $f\in \Mod_g^{n}$.
\begin{itemize}
\item \textbf{Commutativity:} If $a$ and $b$ are disjoint, then $t_at_b=t_bt_a$.
\item \textbf{Conjugation:} If $f(a)=b$, then $ft_af^{-1}=t_b$.
\end{itemize}
( Here, $t_a$ denotes the positive Dehn twist about a simple closed curve $a$.)

\subsection{Lefschetz Fibrations} We remind some basic definitions and facts about Lefschetz fibrations. Throughout the paper we denote the $2$-sphere by $\mathbb{S}^{2}$. Let $M$ be a closed connected oriented smooth $4$-manifold. A \textit{Lefschetz fibration} on $M$  is a smooth surjective map if it has only finitely many critical points $\lbrace p_1,p_2,\ldots,p_n \rbrace$ such that around each of which it is expressed in the form of $f(z_1,z_2)=z_1^{2}+z_2^{2}$ with respect to some local complex coordinates compatible with the orientations of $M$ and $\mathbb{S}^{2}$. (In general, the base of a Lefschetz fibration
can be a closed orientable surface of genus $h\geq 0$, but throughout this paper, we only consider $\mathbb{S}^{2}$.) The genus-$g$ of a regular fiber is defined to be \textit{the genus of the fibration}. The inverse image of a critical value is called a \textit{singular fiber}. We assume that each singular fiber  contains only one critical point, which can be obtained by a small perturbation. Each singular fiber is obtained by collapsing a simple closed curve, called \textit{vanishing cycle}, on a nearby regular fiber to a point. If the vanishing cycle is non-separating (respectively separating), then the corresponding singular fiber is called \textit{irreducible} (respectively \textit{reducible}). Throughout the paper, we also assume that all Lefschetz fibrations are nontrivial and relatively minimal, i.e. it has at least one singular fiber and no fiber
containing a $(-1)$-sphere.

A Lefschetz fibration can be described via its monodromy, which is an element in the mapping class group $\Mod_g$. The monodromy of a Lefschetz fibration $f: M \to \mathbb{S}^{2}$ is given by a positive factorization 
\[
t_{a_1}t_{a_2} \cdots t_{a_m}=1
\]
in $\Mod_g$ up to \textit{Hurwitz moves} (exchanging subwords $t_{a_i}t_{a_{i+1}}=t_{a_{i+1}}t_{t_{a_{i+1}}^{-1}(a_i)}$) and \textit{global conjugations} (changing each $t_{a_i}$ with $t_{\phi(a_i)}$ for some $\phi \in \Mod_g$), where $a_i$'s are vanishing cycles of the singular fibers. A map $\sigma: M \to \mathbb{S}^{2}$ is a \textit{section} if $f\circ\sigma=id_{\mathbb{S}^{2}}$. If a positive relation $t_{a_1}t_{a_2} \cdots t_{a_m}=1$ in $\Mod_g$ has a lifting to $\Mod_g^{k}$ so that 
\[
t_{\tilde{a}_1}t_{\tilde{a}_2} \cdots t_{\tilde{a}_m}=t_{\delta_1}^{n_1}t_{\delta_2}^{n_2}\cdots t_{\delta_k}^{n_k},
\]
where each $n_i$ is integer and $\delta_i$ is a boundary curve, then the Lefschetz fibration $f : M \to \mathbb{S}^{2}$ admits $k$ disjoint sections $S_1,\ldots,S_k$, where $S_j$ is of self-intersection $-n_{j}$ and vice versa \cite{bkm}. We say that two Lefschetz fibrations $f_1: M_1 \to \mathbb{S}^{2}$ and $f_2: M_2 \to \mathbb{S}^{2}$ are \textit{isomorphic} if there exist orientation preserving diffeomorphisms $G:M_1 \to M_2$ and $g: \mathbb{S}^{2} \to \mathbb{S}^{2}$ such that $f_2\circ G=g\circ f_1$.

The hyperelliptic mapping class group $\HMod_g$ of $\Sigma_g$ is defined as the subgroup of $\Mod_g$ that is the centralizer of a hyperelliptic involution $\iota: \Sigma_g \to \Sigma_g$. A Lefschetz fibration is said to be \textit{hyperelliptic} if its vanishing cycles are invariant under the hyperelliptic involution $\iota$ up to isotopy. 

For a genus-$g$ Lefschetz fibration $f:M\to \mathbb{S}^{2}$, the Euler characteristic, $e(M)$, of the $4$-manifold $M$ can be computed as 
\[
e(M)=4-4g+n+s,
\]
where $n$ and $s$ are the numbers of nonseparating and separating vanishing cycles, respectively. Also we define the following invariant associated to the $4$-manifold $M$:
\[
\chi_{h}(M)=\dfrac{e(M)+\sigma(M)}{4},
\]
where $\sigma(M)$ is the signature of $M$. Let us note that if $M$ is a complex surface, $\chi_{h}(M)$ is the holomorphic Euler characteristic.

It follows  from the theory of Lefschetz fibrations that if a Lefschetz fibration $f: M \to \mathbb{S}^{2}$ with a regular fiber $\Sigma_g$ and the monodromy $t_{\alpha_1}t_{\alpha_2} \cdots t_{\alpha_m}=1$ admits a section, then the fundamental group $\pi_{1}(M)$ of $M$ is isomorphic to the group $\pi_{1}(\Sigma_g)$ divided by the normal closure of the vanishing cycles (cf~\cite{gs}), that is,
\[
\pi_{1}(M)\cong \pi_1(\Sigma_g) / \langle \alpha_1, \alpha_2,\ldots \alpha_m\rangle
\]

The signature $\sigma(M)$ of $M$, which is another invariant of the Lefschetz fibration $f:M \to \mathbb{S}^{2}$ can be computed using several techniques. For instance, Endo and Nagami~\cite{e11}
gave a useful method which uses the signatures of the relations involved in its monodromy. For an integer-valued function $I_g$ on the set of relators of $\Mod_g$ (see \cite{e11} for its definition and properties), the following  theorem holds:
\begin{theorem}\cite{e11}\label{signen}
Let $f:M \to \mathbb{S}^{2}$ be a genus-$g$ Lefschetz fibration with the monodromy $t_{c_1}t_{c_2}\cdots t_{c_n}=1$. Then the signature of $M$ is
\[
\sigma(M)=I_g(c_1c_2\cdots c_n).
\]
\end{theorem}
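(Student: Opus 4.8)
The plan is to realize $\sigma(M)$ as an assembly of local contributions, following the circle of ideas behind Meyer's signature cocycle. The key point is that the function $I_g$ of \cite{e11} is \emph{defined} precisely so as to record the signature data carried by a positive relator; hence what must actually be proven is that this combinatorially defined quantity agrees with the topological signature of the total space $M$. I would therefore first exploit the structure of $M$ coming from the fibration: over the complement of small disks around the $n$ critical values, $f$ restricts to a genuine $\Sigma_g$-bundle over an $n$-holed sphere, whose monodromies around the boundary circles are the Dehn twists $t_{c_1},\dots,t_{c_n}$, while over each small disk $f$ is modeled on the standard Lefschetz singularity $z_1^{2}+z_2^{2}$. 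The hypothesis that the total product $t_{c_1}\cdots t_{c_n}=1$ in $\Mod_g$ is exactly what guarantees that these pieces reassemble into the closed manifold $M$.

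Second, I would invoke Novikov additivity of the signature. Cutting $M$ along the (copies of $\Sigma_g\times S^{1}$) boundaries separating the bundle part $E_0$ over the $n$-holed sphere from the $n$ singular-fiber neighborhoods writes $\sigma(M)$ as the sum of $\sigma(E_0)$ and the signatures of the Lefschetz neighborhoods. The bundle part is then handled by Meyer's theorem: its signature equals minus the sum of the values of the Meyer cocycle $\tau_g$ taken along the partial products of the monodromy word, namely
\[
\sigma(E_0)=-\sum_{i=1}^{n-1}\tau_g\bigl(t_{c_1}\cdots t_{c_i},\, t_{c_{i+1}}\bigr),
\]
where one uses the $2$-cocycle property of $\tau_g$ together with $t_{c_1}\cdots t_{c_n}=1$ to see that the $n$-holed-sphere bundle closes up consistently. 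This identifies the ``bundle'' half of $I_g$.

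Third, I would compute the local signature of each singular-fiber neighborhood from its local intersection form. For a nonseparating vanishing cycle this contributes a single universal number, whereas for a separating cycle splitting the fiber into pieces of genus $h$ and $g-h$ there is an additional correction depending on $h$; these are exactly the local-signature terms built into the definition of $I_g$ in~\cite{e11}. Adding the Meyer-cocycle part from the previous step to these local corrections yields precisely $I_g(c_1c_2\cdots c_n)$, giving $\sigma(M)=I_g(c_1\cdots c_n)$.

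The main obstacle, I expect, will be twofold and both parts sit in the last step: first, the explicit local signature computation at a Lefschetz singular fiber, which is where the genuine content lies and which produces the correction terms distinguishing separating from nonseparating cycles; and second, verifying that $I_g$ is well-defined as a function on relators, i.e.\ independent of the chosen Hurwitz representative and invariant under global conjugation. This well-definedness is not automatic and again rests on the cocycle identity for $\tau_g$ (so that regrouping the partial products leaves the total sum unchanged), together with the fact that conjugating all vanishing cycles by a fixed $\phi\in\Mod_g$ changes $M$ only by an orientation-preserving diffeomorphism and hence preserves $\sigma(M)$.
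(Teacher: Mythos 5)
The paper does not actually prove this statement; it is quoted from Endo--Nagami \cite{e11}, and the reader is explicitly referred there for the proof. Your outline --- Novikov additivity applied to the decomposition of $M$ into the $\Sigma_g$-bundle over the $n$-holed sphere and the $n$ singular-fibre neighbourhoods, Meyer's signature cocycle $\tau_g$ telescoped over the partial products for the bundle piece, and local signature contributions distinguishing separating from nonseparating vanishing cycles --- is precisely the strategy of that source, so your approach matches the cited proof, with the caveat (which you acknowledge) that the local signature computations and the well-definedness of $I_g$ on relators are deferred rather than carried out.
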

\noindent
This method allows us to compute the signature of a Lefschetz fibration $f:M \to \mathbb{S}^{2}$ as the sum of basic relations in its monodromy.
Let us recall some signatures that we will need later. For the proof, see \cite{e11}.

\begin{itemize}
\item  $I_{g}(a)=-1$, where $a$ is the isotopy class of a separating curve.
\item  $ I_{g}((B_{0}B_{1}\cdots B_{g}C)^{2})=-4$ if $g$ is even.
\end{itemize}
 Here, the word $(B_{0}B_{1}\cdots B_{g}C)^{2}$ is the relator coming from Matsumoto's relation which is explained later (see~\ref{mats2}).

One can also use  Ozbagci's algorithm~\cite{oz} to compute the signature of a Lefschetz fibration $f:M \to \mathbb{S}^{2}$. For hyperelliptic Lefschetz fibrations, we have the following  useful lemma:

\begin{lemma}\cite{eh,mat1,mat}\label{lem2}
Let $f\colon M \to \mathbb{S}^{2}$ be a genus-$g$ hyperelliptic Lefschetz fibration. 
Let $n$ and $s=\sum_{h=1}^{[g/2]}s_{h}$ denote the numbers of non-separating
 and separating vanishing cycles of this fibration, respectively, where $s_h$ is the number of separating vanishing cycles that separate the genus-$g$ surface into two surfaces one of which has genus $h$. Then the signature of $M$ is
\begin{eqnarray*}
\sigma(M)&=&-\dfrac{g+1}{2g+1}n+\sum_{h=1}^{[g/2]} \bigg(\dfrac{4h(g-h)}{2g+1}-1\bigg)s_{h}.
\end{eqnarray*}
\end{lemma}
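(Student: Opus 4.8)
The plan is to prove the formula by exhibiting the signature as a sum of \emph{local contributions}, one for each vanishing cycle, whose value depends only on the topological type of that cycle (nonseparating, or separating of type $h$). The conceptual engine is Meyer's signature cocycle $\tau_g\in Z^{2}(\Sp(2g,\Z);\Z)$: cutting $M$ along the boundaries of small disks around the critical values of $f$ and applying Novikov additivity, the signature of $M$ splits as the signature of the surface bundle over the punctured sphere (computed by $\tau_g$ evaluated on the monodromy) plus a fixed local term for each singular fiber. Since $f$ is hyperelliptic, its monodromy lies in $\HMod_g$, so the relevant symplectic representation factors as $\rho\colon\HMod_g\to\Sp(2g,\Z)$ and we only ever evaluate $\tau_g$ along the image of $\HMod_g$.

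First I would establish the structural fact that makes a per-twist local signature well defined: the pullback of Meyer's class to $\HMod_g$ is a torsion class. This rests on the Birman--Hilden description of $\HMod_g$ in terms of the mapping class group of the sphere with $2g+2$ marked points, whose degree-two rational cohomology is controlled enough to force $[\,\rho^{*}\tau_g\,]$ to have order dividing $2g+1$ (up to a small factor). Consequently $\rho^{*}\tau_g=\delta\phi$ for some $\phi\colon\HMod_g\to\Q$, i.e.
\[
\tau_g(\rho(A),\rho(B))=\phi(A)+\phi(B)-\phi(AB),
\]
and because the abelianization of $\HMod_g$ is finite for $g\ge 2$, the function $\phi$ is unique and hence invariant under conjugation. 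Feeding the relation $t_{c_1}t_{c_2}\cdots t_{c_m}=1$ into the telescoping sum of $\tau_g$-terms then collapses the bundle contribution, so that $\sigma(M)$ becomes a genuine sum of per-Dehn-twist terms, each being the value of $\phi$ together with the local fiber contribution for that cycle.

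It remains to compute these terms, which is where the real work lies. Conjugation-invariance of $\phi$ reduces the problem to two kinds of input: a single nonseparating twist (all nonseparating curves are conjugate in $\HMod_g$) and, for each $1\le h\le[g/2]$, a single separating twist splitting $\Sigma_g$ into pieces of genus $h$ and $g-h$. To pin down the nonseparating value I would evaluate everything on an explicit hyperelliptic factorization with only nonseparating vanishing cycles --- the chain relation $(t_{c_1}t_{c_2}\cdots t_{c_{2g+1}})^{2g+2}=1$, whose total space has independently computable Euler characteristic and signature --- and solve for the common value, obtaining $-\frac{g+1}{2g+1}$. For the separating twists one combines the reducible-fiber local term $I_g(a)=-1$ recorded above with the symplectic contribution: a separating curve of type $h$ is null-homologous, so its $\Sp(2g,\Z)$-image is trivial, and the extra term $\frac{4h(g-h)}{2g+1}$ emerges from the Meyer-cocycle defect of a relation in which such a curve is expressed as a product of nonseparating twists (for $g$ even and $h=g/2$ this is exactly the Matsumoto relation, whose value $I_g((B_0B_1\cdots B_gC)^{2})=-4$ quoted above serves as a consistency check). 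Summing the nonseparating contribution over the $n$ irreducible fibers and the type-$h$ contribution over the $s_h$ reducible fibers then yields the stated formula.

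The main obstacle is the middle step: showing that $\rho^{*}\tau_g$ is a coboundary and that the resulting $\phi$ is conjugation-invariant, since this is precisely the assertion that a well-defined \emph{local} signature exists for hyperelliptic fibrations --- something that fails for general Lefschetz fibrations. Once this is in place, the explicit evaluations are linear computations anchored by the two values of $I_g$ quoted before the lemma and by the known invariants of the chain-relation fibration, with the $4h(g-h)$ coefficient dictated by the intersection-form data of the genus-$h$ splitting.
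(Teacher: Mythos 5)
The paper does not prove this lemma at all: it is quoted verbatim from the cited sources (Endo's work on Meyer's signature cocycle for hyperelliptic fibrations, building on Matsumoto's genus-one and genus-two cases), so there is no in-paper argument to compare against. Your sketch is a faithful reconstruction of exactly that proof --- Novikov additivity plus Meyer's cocycle, the vanishing of the pulled-back Meyer class on $\HMod_g$ via Birman--Hilden, uniqueness and conjugation-invariance of the cobounding function from finiteness of $H_1(\HMod_g;\Z)$, and evaluation on the chain relation and on the type-$h$ separating twists --- and your consistency check against $I_g((B_0B_1\cdots B_gC)^2)=-4$ (with $C$ of type $g/2$) comes out correctly. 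So the proposal is correct and takes essentially the same route as the paper's source.
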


\begin{remark}
 One can easily obtain that $\sigma(M)\leq n-s-2$ using $b_1(M)\leq 2g-1$ by the handlebody decomposition of nontrivial Lefschetz fibrations $f: M \to \mathbb{S}^{2}$ and the fact that such fibrations have at least one non-separating vanishing cycle. If the Lefschetz fibration $f: M \to \mathbb{S}^{2}$ is hyperelliptic, then Ozbagci~\cite{oz} proved that $\sigma(M)\leq n-s-4$. Later, for every Lefschetz fibration $f: M \to \mathbb{S}^{2}$, Cadavid~\cite{Cadavid} improved the upper bound of signature $\sigma(M)$ showing that
 \begin{eqnarray}
\sigma(M)\leq n-s-2(2g-b_1(M)).
\end{eqnarray}
When the $4$-manifold $X$ is simply-connected, the above inequality turns out to be
 \begin{eqnarray}\label{eq2} 
\sigma(X)\leq n-s-4g.
\end{eqnarray}
\end{remark}

For a closed orientable surface of genus $g\geq1$, the first homology group $H_1(\HMod_g;\mathbb{Z})$ of the hyperelliptic mapping class
  group $\HMod_{g}$ has the following isomorphism:
\[ H_1(\HMod_{g};\mathbb{Z})\cong
  \begin{cases}
   \mathbb{Z}/ 4(2g+1),&\text{if $g$ is odd},\\
   \mathbb{Z}/ 2(2g+1),&\text{if $g$ is even},\\
    \end{cases}
\]
which can be proven by the presentation of the hyperelliptic mapping class
  group $\HMod_{g}$~\cite{bh}. In the hyperelliptic mapping class
  group $\HMod_{g}$ all Dehn twists about non-separating simple closed curves are nontrivial and each of which maps to the same generator in $H_1(\HMod_g;\mathbb{Z})$ under the natural map $\HMod_{g} \to H_1(\HMod_g;\mathbb{Z})$. Thus, the number of twists of a factorization of identity in $\HMod_{g}$ consisting of positive Dehn twists about non-separating simple closed curves is divisible by  $2(2g+1)$ (respectively $4(2g+1)$) if $g$ is even (respectively odd). We say that a separating simple closed curve on $\Sigma_g$ is of \textit{type $h$} if it separates $\Sigma_g$ into two subsurfaces of genera $h$
 and $g-h$. It is known that each  separating simple closed curve of type $h$ can be written as a product of $2h(4h+2)$ positive Dehn twists about non-separating simple closed curves. Therefore, we have the following lemma which gives a relation between the number of non-separating vanishing cycles and that of separating vanishing cycles in a genus-$g$ hyperelliptic Lefschetz fibration:

\begin{lemma}\label{lem1}
Let $n$ (or $s$) be the number of non-separating (resp. separating) vanishing cycles in a genus-$g$ hyperelliptic Lefschetz fibration over $\mathbb{S}^{2}$. Then, we have
\begin{eqnarray}\label{eq1} 
n+\sum_{h=1}^{[g/2]}2h(4h+2)s_{h}\equiv  \begin{cases} 0 \pmod{4(2g+1)},&\text{if $g$ is odd,} \\ 
0 \pmod{2(2g+1)},&\text{if $g$ is even},  \end{cases}
\nonumber 
\end{eqnarray}
where $s_h$ is the number of separating vanishing cycles of type $h$ with $s=\sum_{h=1}^{[g/2]}s_{h}$.
\end{lemma}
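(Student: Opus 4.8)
The plan is to convert the monodromy into a factorization of the identity that uses only Dehn twists about non-separating curves, and then to read off the divisibility from the first homology of $\HMod_g$. Since the fibration is hyperelliptic, every vanishing cycle is invariant under $\iota$, so the monodromy relation $t_{a_1}t_{a_2}\cdots t_{a_m}=1$ may be regarded as a factorization of the identity inside $\HMod_g$. Among the $m=n+s$ factors, exactly $n$ are twists about non-separating curves, and for each $h$ there are $s_h$ twists about separating curves of type $h$.

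The first step is to eliminate the separating twists. For a separating curve $\delta$ of type $h$, I would invoke the chain relation on a chain $c_1,\dots,c_{2h}$ of non-separating curves whose regular-neighborhood boundary is $\delta$; the relation $(t_{c_1}\cdots t_{c_{2h}})^{4h+2}=t_{\delta}$ expresses $t_{\delta}$ as a product of exactly $2h(4h+2)$ Dehn twists about non-separating curves. Since such a chain can be chosen invariant under $\iota$, this identity holds in $\HMod_g$. Substituting these expressions for each of the $s_h$ separating twists of type $h$ transforms the monodromy into a factorization of the identity in $\HMod_g$ consisting entirely of twists about non-separating curves, whose total number, counted with multiplicity, is $n+\sum_{h=1}^{[g/2]}2h(4h+2)\,s_h$.

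The final step is homological. Applying the natural projection $\HMod_g\to H_1(\HMod_g;\mathbb{Z})$ to this rewritten relation, and using the fact recalled above that every non-separating Dehn twist maps to one and the same generator of $H_1(\HMod_g;\mathbb{Z})$, the vanishing of the image of the identity forces the number of factors to be an integer multiple of the order of that generator, namely $4(2g+1)$ when $g$ is odd and $2(2g+1)$ when $g$ is even. This is precisely the asserted congruence.

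The main obstacle I expect is bookkeeping rather than conceptual: confirming that the chain relation genuinely produces the count $2h(4h+2)$ for a type-$h$ separating curve, and that the chain together with all resulting non-separating twists can be taken invariant under $\iota$, so that the entire rewriting takes place inside $\HMod_g$ and the homological computation is legitimate. Once this is verified, the divisibility follows immediately from the cyclic structure of $H_1(\HMod_g;\mathbb{Z})$.
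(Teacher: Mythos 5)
Your proposal is correct and follows essentially the same route as the paper: the lemma there is derived from exactly the facts you invoke, namely that $H_1(\HMod_g;\mathbb{Z})$ is cyclic of order $4(2g+1)$ (resp.\ $2(2g+1)$) for $g$ odd (resp.\ even) with every non-separating Dehn twist mapping to the same generator, together with the expression of a type-$h$ separating twist as a product of $2h(4h+2)$ non-separating twists. The only difference is that you make explicit the chain-relation bookkeeping and the $\iota$-equivariance check, which the paper leaves implicit.
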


  \par
  \subsection{The smallest genus two Lefschetz fibration.}
  
\begin{figure}[hbt!]
\begin{center}
\scalebox{0.3}
{\includegraphics{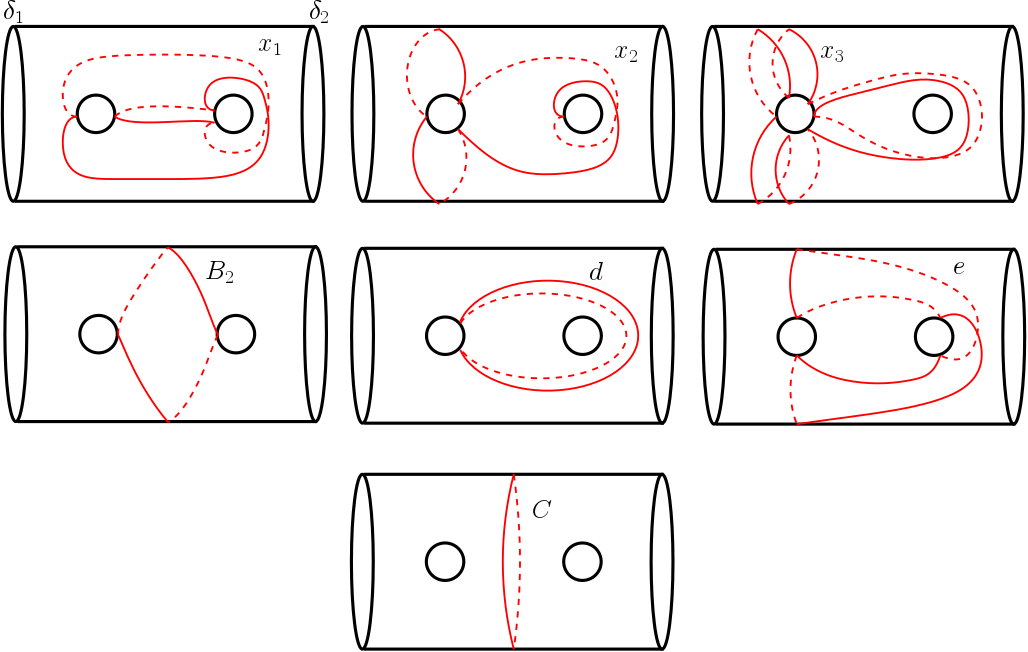}}
\caption{The curves $x_{i}$, $B_2$, $C$, $d$ and $e$ on the surface $\Sigma_{2}^{2}$.}\label{BK}
\end{center}
\end{figure}
 In~\cite{bki}, Baykur and Korkmaz obtained the following relation in $\Mod_{2}^{1}$:
  \[
t_{e}t_{x_{1}}t_{x_{2}}t_{x_{3}} t_dt_Ct_{x_{4}}=t_\delta,
\]
which can be rewritten as 
\[
t_{e}t_{x_{1}}t_{x_{2}}t_{x_{3}} t_dt_{B_{2}}t_C=t_\delta,
\]
where $t_c(x_4)=B_2$. Set $T=t_{e}t_{x_{1}}t_{x_{2}}t_{x_{3}} t_dt_{B_{2}}t_C$. They also showed that the positive factorization $T=t_\delta$ realizes the smallest genus-$2$ Lefschetz fibrations whose total space is diffeomorphic to $(\mathbb{T}^{2}\times\mathbb{S}^2)\#3\overline{\mathbb{C} P^{2}}$. Stipsicz and Yun~\cite{StY} obtained the following lifting of $T$:
\begin{eqnarray}\label{eqsy}
T=t_{e}t_{x_{1}}t_{x_{2}}t_{x_{3}} t_dt_{B_{2}}t_C=t_{\delta_1}t_{\delta_2}
\end{eqnarray}
 in $\Mod_{2}^{2}$, where the curves $x_i$, $B_2$, $C$, $d$, $e$, $\delta_1$ and $\delta_2$  are as depicted in Figure~\ref{BK}.
 \subsection{Generalized Matsumoto's relation.}\label{mats2}
A relation with eight positive Dehn twists was discovered by Matsumoto~\cite{mat}, which is the global monodromy of a Lefschetz fibration on $(\mathbb{T}^{2}\times\mathbb{S}^2)\#4\overline{\mathbb{C} P^{2}}$. It is later generalized to higher genus surfaces by Korkmaz~\cite{k1}, independently by Cadavid~\cite{Cadavid} and recently by a different proof~\cite{dmp}. A lift of this relation to $\Mod_{g}^{1}$ was first discovered by Ozbagci
and Stipsicz~\cite{os} and another lift to $\Mod_{g}^{2}$ by Korkmaz~\cite{kl}. However, we will use the following lift to $\Mod_{g}^{2}$ proved by Hamada~\cite{h}:
\[
V_g=\left\{\begin{array}{lll}
(t_{B_0}t_{B_1}\cdots t_{B_g}t_{C})^{2}& \textrm{if} & g=2k,\\
(t_{B_0}t_{B_1}\cdots t_{B_g}t_{a}^{2}t_{b}^{2})^{2}& \textrm{if} & g=2k+1.\\
\end{array}\right.
\]
where $\delta_i$ are the boundary parallel curves, and the curves $B_i$ and $C$ are as shown in Figure~\ref{ML}.
\begin{figure}[hbt!]
\begin{center}
\scalebox{0.35}
{\includegraphics{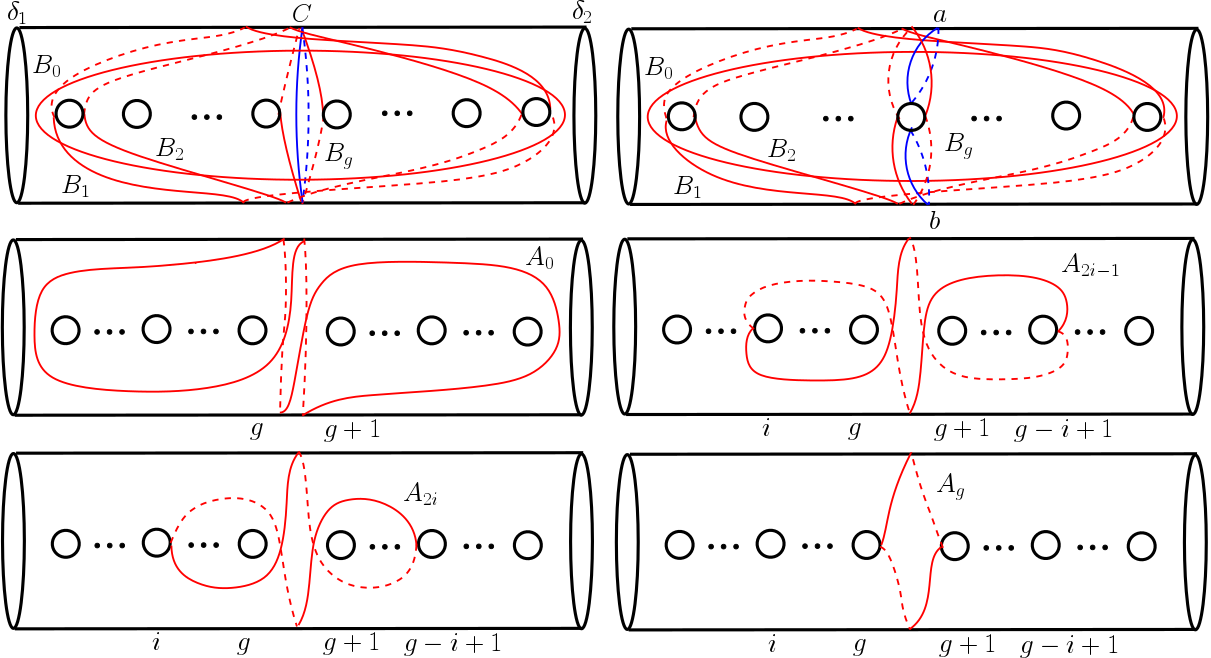}}
\caption{The curves $B_{i}$, $A_i$, $C$, $a$ and $b$ on the surface $\Sigma_{g}^{2}$.}\label{ML}
\end{center}
\end{figure}

 One may rewrite the generalized Matsumoto's relation for $g=2k$ as follows:
  \begin{eqnarray*}
(t_{B_0} t_{B_1} \cdots t_{B_g}t_{C})^{2}&=&(t_{B_0} t_{B_1} \cdots t_{B_g} t_{C})(t_{B_0} t_{B_1} \cdots t_{B_g} t_{C})\\
 &=&(t_{C}t_{t_{C}^{-1}(B_0)}t_{t_{C}^{-1}(B_1)}\cdots t_{t_{C}^{-1}(B_g)})(t_{B_0} t_{B_1} \cdots t_{B_g} t_{C})=t_{\delta_1}t_{\delta_2}.
  \end{eqnarray*}
  Hence, it follows from the Dehn twist $t_C$ commutes with $t_{\delta_1}$ and $t_{\delta_2}$ that we get
 \begin{eqnarray}\label{mat2}
 V_g= t_{C}^{2} t_{A_0} t_{A_1} \cdots t_{A_g}t_{B_0} t_{B_1} \cdots t_{B_g}=t_{\delta_1}t_{\delta_2}
 \end{eqnarray}
in $\Mod_{g}^{2}$, where each $A_i=t_{t_{C}^{-1}(B_i)}$ is shown in Figure~\ref{ML}. Note that the total space of the genus-$g$ Lefschetz fibration is diffeomorphic to $(\Sigma_k \times\mathbb{S}^2)\#4\overline{\mathbb{C} P^{2}}$ (respectively $(\Sigma_k\times\mathbb{S}^2)\#8\overline{\mathbb{C} P^{2}}$) if $g=2k$ (respectively $g=2k+1$).
  \section{Small Lefschetz fibrations of fiber genus $4$ on simply connected $4$-manifolds}\label{S3}
In this section our aim is to construct small genus-$4$ Lefschetz fibrations on  simply connected $4$-manifolds. In order to make our construction, first we will derive a positive factorization of $t_{\delta_1}t_{\delta_2}$ in $\Mod_{2}^{2}$ with $(n,s)=(4,3)$, which will be one of our building blocks. For completeness of our contructions, using the breeding technique~\cite{bk1,b1,bh}, we also give the positive factorization $W$ of $t_{\delta_1}t_{\delta_2}$ in $\Mod_{3}^{2}$ with $(n,s)=(12,6)$ constructed by Baykur~\cite{b1}. Afterwards, we derive a nonhyperelliptic genus-$4$ Lefschetz fibration on an exotic copy of $\mathbb{C} P^{2}\#8\overline{\mathbb{C} P^{2}}$ by breeding the factorization $W$ with the Matsumoto's genus-$2$ factorization $V_2$ given in (\ref{mat2}). Moreover, we construct a hyperelliptic genus-$4$ Lefschetz fibration on an exotic copy of $\mathbb{C} P^{2}\#9\overline{\mathbb{C} P^{2}}$ using again the breeding technique to the generalized Matsumoto's factorization $V_4$ given in (\ref{mat2}) and the factorizations which give the smallest genus-$2$ Lefschetz fibration.
\begin{figure}[h]
\begin{center}
\scalebox{0.3}
{\includegraphics{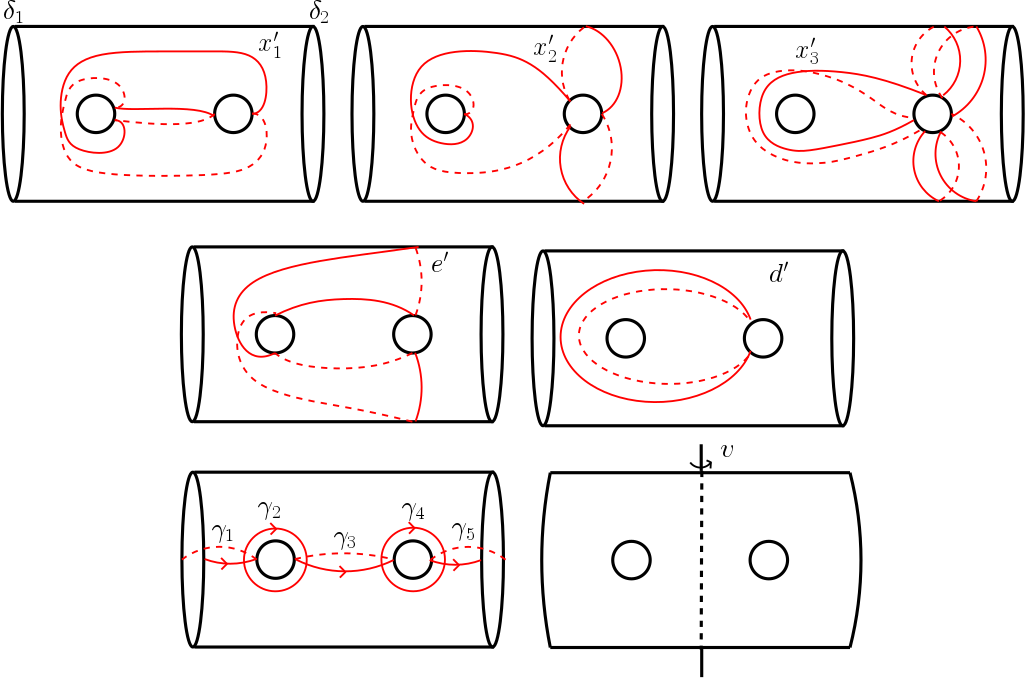}}
\caption{The curves  $x_i'$, $d'$, $e'$, $\gamma_j$'s on the surface $\Sigma_{2}^{2}$ and the involution $\upsilon$ on the surface $\Sigma_{2}$. }\label{TC}
\end{center}
\end{figure}

\begin{figure}
\begin{center}
\scalebox{0.28}{\includegraphics{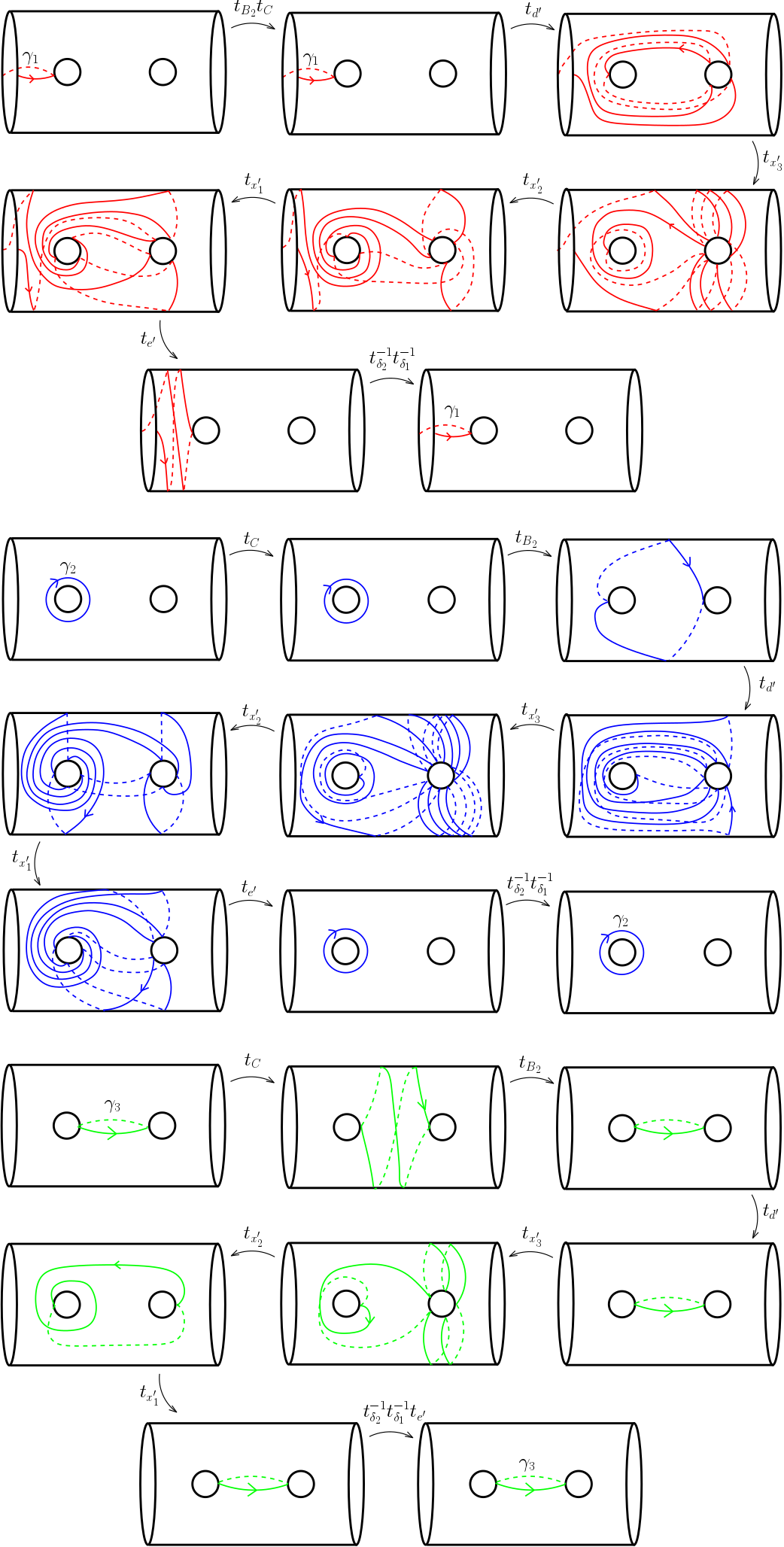}}
\caption{$t_{\delta_2}^{-1}t_{\delta_1}^{-1}t_{e'}t_{x_1'}t_{x_2'}t_{x_3'}t_{d'}t_{B_2}t_{C}(\gamma_i)=\gamma_i$ for $i=1,2,3.$}\label{G1-2-3}
\end{center}
\end{figure}

\begin{figure}
\begin{center}
\scalebox{0.28}{\includegraphics{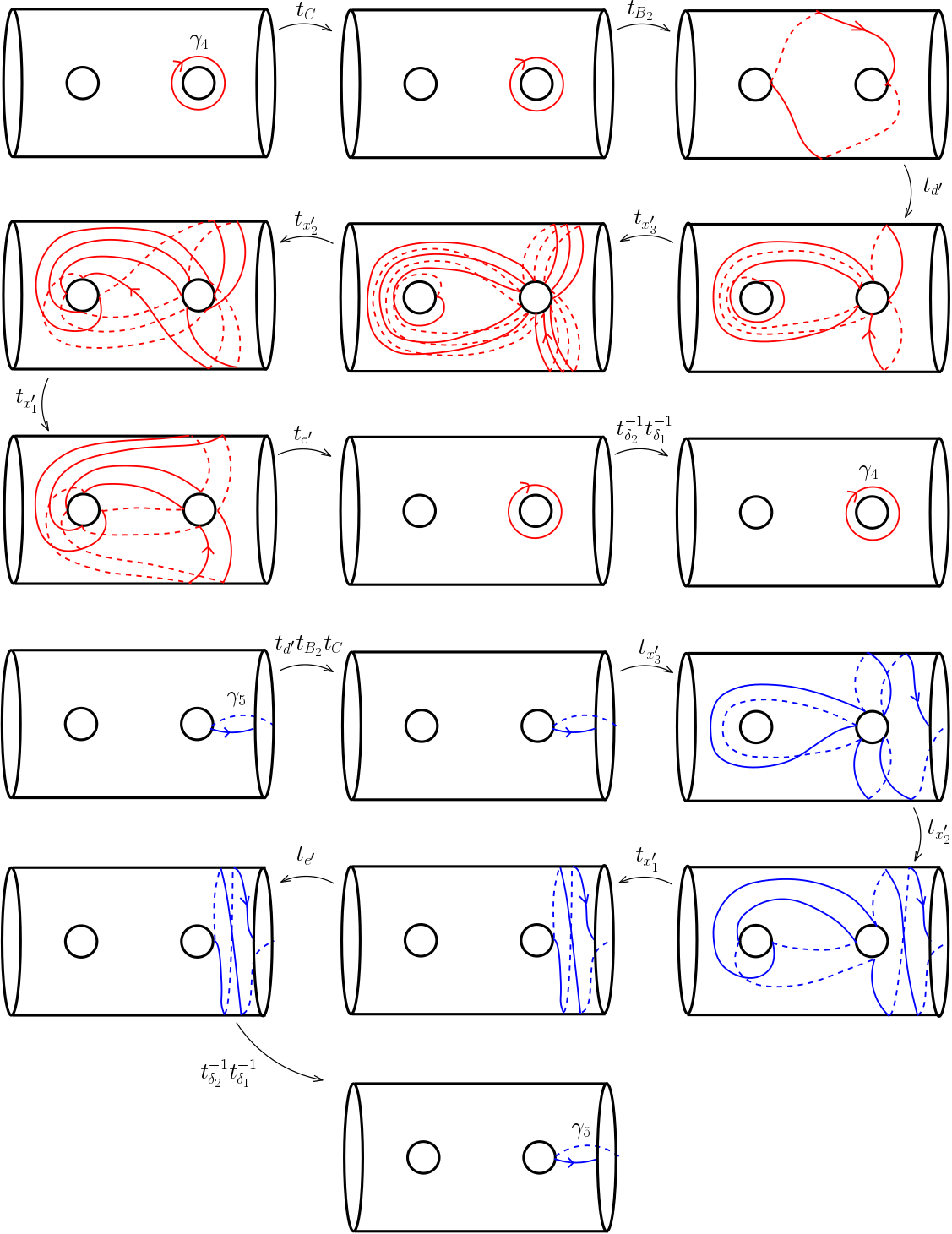}}
\caption{$t_{\delta_2}^{-1}t_{\delta_1}^{-1}t_{e'}t_{x_1'}t_{x_2'}t_{x_3'}t_{d'}t_{B_2}t_{C}(\gamma_i)=\gamma_i$ for $i=4,5$}\label{G4-5}
\end{center}
\end{figure}

Consider the curves $x_1'$, $x_2'$, $x_3'$, $B_2$, $C$, $d'$ and $e'$ on the genus-$2$ surface $\Sigma_{2}^{2}$ shown in Figure~\ref{TC}. Note that the closed surface $\Sigma_2$ is embedded in $\mathbb{R}^3$ in such a way that it is invariant under the involution $\upsilon$ shown in Figure~\ref{TC}. One can observe that these curves can be obtained by the applying the involution $\upsilon$ to the curves contained in factorization (\ref{eqsy}). 

We now prove a factorization of  $t_{\delta_1}t_{\delta_2}$ in $\Mod_{2}^{2}$.

\begin{lemma} There is a relation 
\begin{eqnarray}\label{eq31}
t_{e'}t_{x_1'}t_{x_2'}t_{x_3'}t_{d'}t_{B_2}t_{C}=t_{\delta_1}t_{\delta_2}.
\end{eqnarray}
in $\Mod_{2}^2$, where the curves $x_i'$, $B_2$, $C$, $d'$ and $e'$ are as in Figure~\ref{TC}.
\end{lemma}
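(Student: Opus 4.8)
The plan is to obtain (\ref{eq31}) directly from the Stipsicz--Yun relation (\ref{eqsy}) by applying the involution $\upsilon$ of Figure~\ref{TC}, rather than by manipulating Dehn twists by hand. The key point, already signalled in the text, is that the primed curves are precisely the $\upsilon$-images of the unprimed ones, namely $x_i'=\upsilon(x_i)$, $d'=\upsilon(d)$, $e'=\upsilon(e)$, while $B_2$ and $C$ are $\upsilon$-invariant up to isotopy, which is exactly why they appear unprimed in (\ref{eq31}). First I would record the formal mechanism: although $\upsilon$ is drawn on the closed surface $\Sigma_2$, choosing the two capping disks to be interchanged (or fixed) by $\upsilon$ lifts it to a diffeomorphism of $\Sigma_2^2$ that preserves $\partial\Sigma_2^2$ setwise, so conjugation $f\mapsto \upsilon f\upsilon^{-1}$ is a well-defined automorphism of $\Mod_2^2$. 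Since $\upsilon$ is orientation-preserving, the Conjugation property of Dehn twists gives $\upsilon t_a\upsilon^{-1}=t_{\upsilon(a)}$, so positivity is preserved and no twist acquires a negative sign.

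Next I would pin down the action of $\upsilon$ on the two boundary components. Because $\upsilon$ either fixes $\delta_1,\delta_2$ individually or interchanges them, and boundary twists commute, in either case $\{\upsilon(\delta_1),\upsilon(\delta_2)\}=\{\delta_1,\delta_2\}$ forces $t_{\upsilon(\delta_1)}t_{\upsilon(\delta_2)}=t_{\delta_1}t_{\delta_2}$. Conjugating (\ref{eqsy}) by $\upsilon$ then yields
\[
t_{\upsilon(e)}t_{\upsilon(x_1)}t_{\upsilon(x_2)}t_{\upsilon(x_3)}t_{\upsilon(d)}t_{\upsilon(B_2)}t_{\upsilon(C)}=t_{\upsilon(\delta_1)}t_{\upsilon(\delta_2)},
\]
which, after substituting the curve identifications above and $t_{\upsilon(\delta_1)}t_{\upsilon(\delta_2)}=t_{\delta_1}t_{\delta_2}$, is exactly the asserted relation (\ref{eq31}). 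Thus the algebra is immediate once the geometry is in place.

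What remains, and what I expect to be the only genuine work, is to confirm from Figure~\ref{TC} that these geometric identifications actually hold: that $\upsilon$ carries $x_i,d,e$ to the drawn curves $x_i',d',e'$, and that it fixes $B_2$ and $C$ up to isotopy rel boundary. I would verify this by tracking each curve under $\upsilon$ and isotoping the image into the standard position of the figure. The two subtleties to watch are (i) that $\upsilon$ is indeed orientation-preserving, so that the left-hand side stays a product of \emph{positive} twists in the stated order rather than appearing reversed or with negative exponents, and (ii) the precise behaviour of $\upsilon$ on the boundary collar, so that the right-hand side remains $t_{\delta_1}t_{\delta_2}$ and the section data recorded by the lift to $\Mod_2^2$ is unaffected. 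Once these figure-level checks are settled, the relation follows formally from (\ref{eqsy}) with no further computation.
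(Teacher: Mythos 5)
Your proof is correct in outline but takes a genuinely different route from the paper's. The paper does \emph{not} use the involution $\upsilon$ in its argument at all: it proves the relation directly by the Alexander method, checking (in Figures~\ref{G1-2-3} and~\ref{G4-5}) that a filling collection of oriented curves and arcs $\{\gamma_1,\dots,\gamma_5\}$ on $\Sigma_2^2$ is fixed, with orientations preserved, by the composite map $t_{\delta_2}^{-1}t_{\delta_1}^{-1}t_{e'}t_{x_1'}t_{x_2'}t_{x_3'}t_{d'}t_{B_2}t_{C}$, which forces that map to be the identity. You instead exploit the symmetry that the paper only remarks on in passing (``these curves can be obtained by applying the involution $\upsilon$ to the curves contained in factorization~(\ref{eqsy})''): conjugating the Stipsicz--Yun relation~(\ref{eqsy}) by a lift of $\upsilon$ to $\Sigma_2^2$. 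Your formal mechanism is sound --- conjugation by an orientation-preserving diffeomorphism sends the positive factorization to the positive factorization over the image curves, and $t_{\upsilon(\delta_1)}t_{\upsilon(\delta_2)}=t_{\delta_1}t_{\delta_2}$ whether $\upsilon$ fixes or swaps the boundary components --- and you correctly isolate the two points that carry all the content: that $\upsilon$ is orientation-preserving (it is a $\pi$-rotation of the embedded surface) and that it lifts to $\Sigma_2^2$ preserving the boundary pair while carrying $e,x_i,d$ to $e',x_i',d'$ and fixing $B_2,C$. Your approach buys a conceptual explanation of \emph{why} the relation holds (it is the $\upsilon$-mirror of a known one) at the cost of resting on those figure-level identifications of $\upsilon$-images; the paper's approach is more computational but self-contained, verifying the identity on a filling system without ever needing to pin down the action of $\upsilon$ or its lift. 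Both are legitimate, and the figure-checking burden is comparable in the two arguments.
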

\begin{proof}
Since the collection of simple closed curves and simple proper arcs $\lbrace\gamma_i \rbrace$ fills the genus-$2$ surface $\Sigma_{2}^{2}$ as shown in Figure~\ref{TC}, we will prove the relation (\ref{eq31}) by showing that the oriented three curves $\gamma_2$, $\gamma_3$, $\gamma_4$ and the two arcs $\gamma_1$ and $\gamma_5$ are fixed (up to isotopy) under the map
\[
t_{\delta_2}^{-1}t_{\delta_1}^{-1}t_{e'}t_{x_1'}t_{x_2'}t_{x_3'}t_{d'}t_{B_2}t_{C}.
\]
Indeed, Figures~\ref{G1-2-3} and \ref{G4-5} show that the collection $\lbrace \gamma_i \rbrace$ are fixed and also their given orientations are preserved under this map. This finishes the proof.
\end{proof}

We remark that any genus-$2$ Lefschetz fibration prescribed by a monodromy with $(n,s)=(4,3)$ has total space diffeomorphic to $(\mathbb{T}^{2}\times\mathbb{S}^2)\#3\overline{\mathbb{C} P^{2}}$. So the corresponding genus-$2$ Lefschetz fibration to our monodromy is diffeomorphic to the smallest genus-$2$ Lefschetz fibration constructed by Baykur and Korkmaz~\cite{bki}. 


One of the building block of our monodromy construction is the monodromy of a genus-$3$ Lefschetz fibration on an exotic copy of $\mathbb{C} P^{2}\#7\overline{\mathbb{C} P^{2}}$ given by Baykur~\cite{b1}. For completeness, we give the construction of this monodromy in detail.

Set $T'=t_{e'}t_{x_1'}t_{x_2'}t_{x_3'}t_{d'}t_{B_2}t_{C}$ or $T'=t_{e'}P't_{C}$ so that $T'=t_{e'}P't_{C}=t_{\delta_1}t_{\delta_2}$ in $\Mod_{2}^{2}$. Since $t_{C}$ and the factorization $t_{e'}P'$ commute, we have
\begin{eqnarray}\label{eqwtilde}
T'=t_{e'}P't_{C}=t_{C}t_{e'}P'=t_{\delta_1}t_{\delta_2}
\end{eqnarray}

Similarly, let us write the positive factorization $T$ in (\ref{eqsy}) as $T=t_{e}Pt_C$, where $P=t_{x_{1}}t_{x_{2}}t_{x_{3}} t_dt_{B_{2}}$ so that $T=t_{e}Pt_C=t_{\delta_1}t_{\delta_2}$ in $\Mod_{2}^{2}$. By the commutativity of $t_C$ and $t_{e}P$, we have
 \begin{eqnarray}\label{eqw}
 T=t_{e}Pt_C=t_Ct_{e}P=t_{\delta_1}t_{\delta_2}.
 \end{eqnarray}
 \begin{figure}[h]
\begin{center}
\scalebox{0.5}{\includegraphics{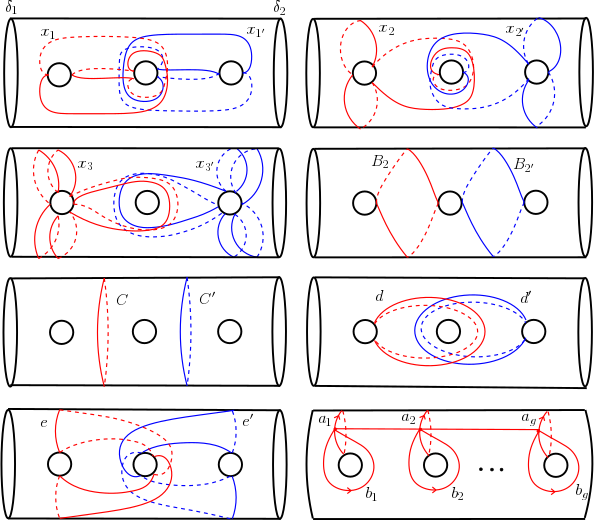}}
\caption{The curves coming from the factorizations $T$ and $T'$ and the generators of $\pi_{1}(\Sigma_g)$}\label{TA3}
\end{center}
\end{figure} 
 Let us embed the relation (\ref{eqw}) into $\Mod_{3}^{2}$ so that the boundary parallel curve $\delta_2$ in Figure~\ref{BK} is mapped to the curve $C'$ in Figure~\ref{TA3}. Hence we get the following relation:
 \begin{eqnarray}\label{eq20}
 T=t_{e}Pt_C=t_Ct_{e}P=t_{\delta_1}t_{C'}.
 \end{eqnarray}
 We also embed the relation (\ref{eqwtilde}) into $\Mod_{3}^{2}$ so that the curve $\delta_1$ in Figure~\ref{BK} is mapped to the curve $C$ in Figure~\ref{TA3}. In this case, the curves $B_2$ and $C$ appearing in the facrorization $T'$ are mapped to the curves $B_2'$ and $C'$ in Figure~\ref{TA3} so that the factorization $P'=t_{x_1'}t_{x_2'}t_{x_3'}t_{d'}t_{B_2'}$. We thus have 
\begin{eqnarray}\label{eq21}
T'=t_{e'}P't_{C'}=t_{C'}t_{e'}P'=t_{C}t_{\delta_2}.
 \end{eqnarray}

 Combining the relations (\ref{eq20}) and (\ref{eq21}), the following relation in $\Mod_{3}^{2}$ holds:
\[
TT'=(t_{C}t_eP)( t_{e'}P't_{C'})=t_{\delta_1}t_{C'}t_{C}t_{\delta_2}.
\]
By the fact that $t_C$ commutes with $t_ePt_{e'}P't_{C'}$ and the curves $\delta_1$, $\delta_2$, $C$ and $C'$ are all disjoint, the relation can be written as 
\[
TT'=t_ePt_{e'}P't_{C'}t_C=t_{\delta_1}t_{\delta_2}t_{C'}t_C,
\]
which gives the following relation
\[
t_ePt_{e'}P't_{C'}t_Ct_{C}^{-1}t_{C'}^{-1}=t_{\delta_1}t_{\delta_2}.
\]
Finally we obtain the following identity in $\Mod_{3}^{2}$:
\begin{eqnarray}\label{eqf}
t_ePt_{e'}P'=t_{\delta_1}t_{\delta_2}.
\end{eqnarray}
\begin{figure}[h]
\begin{center}
\scalebox{0.3}{\includegraphics{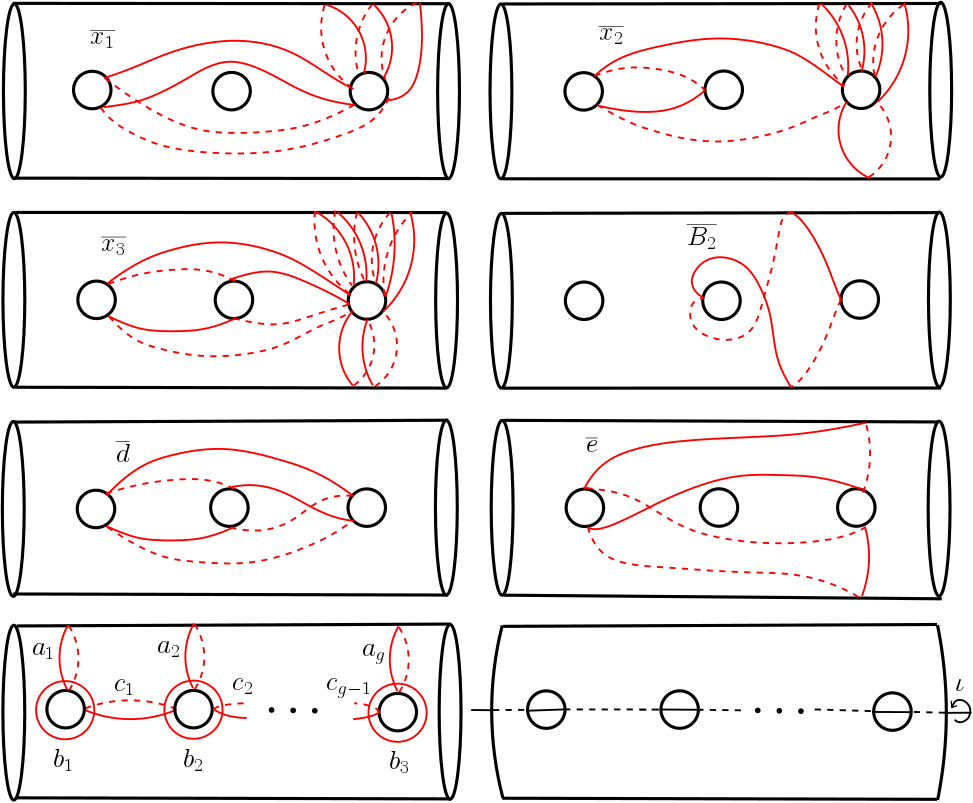}}
\caption{The curves $\overline{x_i}'s$, $\overline{B_2}$, $\overline{d}$, $\overline{e}$ and the hyperelliptic involution $\iota$}\label{TAB3}
\end{center}
\end{figure}

Consider the diffeomorphism $\varphi=t_{a_3}^{3}t_{b_2}t_{c_{1}}$, where the curves  are shown in Figure~\ref{TAB3}. One can easily verify that $\varphi(C)=e$. Then by conjugation of $T'$ by $\varphi$ we obtain the following factorization of $t_{\delta_1}t_{\delta_2}$ in $\Mod_{3}^{2}$:
\[
(T')^{\varphi}=t_{\varphi(C')}t_{ \varphi(e')}(P'^{\varphi})=t_{\varphi(C)}t_{\varphi(\delta_2)},
\]
which can be written as 
\[
(T')^{\varphi}=t_{C'}t_{\varphi(e')}t_{\varphi(x_{1}')}t_{\varphi(x_{2}')}
t_{\varphi(x_{3}')}t_{\varphi(d')}
t_{\varphi(B_{2}')}=t_et_{\delta_2},
\]
since $\varphi(C')=C'$ and $\varphi(\delta_2)=\delta_2$, where $(T')^{\varphi}$ denotes the conjugate factorization. Let us denote $(P')^{\varphi}$ by $\overline{P}$ and denote $\varphi(\alpha)=\overline{\alpha}$ for every curve $\alpha$ appearing in the factorization $t_{e'}P'$. Therefore, we have the following relation:
\begin{eqnarray}\label{eq3rd}
(T')^{\varphi}=t_{C'}t_{\overline{e}}\overline{P}=t_{C'}t_{\overline{e}}t_{\overline{x_1}}t_{\overline{x_2}}t_{\overline{x_3}}t_{\overline{d}}t_{\overline{B_2}}=t_et_{\delta_2},
\end{eqnarray}
where the curves $\overline{e}$,  $\overline{x_i}$'s,  $\overline{d}$ and  $\overline{B_2}$ are shown in Figure~\ref{TAB3}. Therefore, the relation  (\ref{eq3rd}) together with the relation (\ref{eqf}) give rise to the following relation in $\Mod_{3}^{2}$:
\[
(t_ePt_{e'}P')(t_{C'}t_{\overline{e}}\overline{P})=(t_{\delta_1}t_{\delta_2})(t_et_{\delta_2}),
\]
which implies that
\[
t_{e}^{-1}t_ePt_{e'}P't_{C'}t_{\overline{e}}\overline{P}=t_{\delta_1}t_{\delta_2}^{2}
\]
by commutativity of the curve $t_e$ and $t_{\delta_1}t_{\delta_2}$. Finally, by canceling the $t_e$ factors we get the desired equation in $\Mod_{3}^{2}$:
\begin{eqnarray*}
Pt_{e'}P't_{C'}t_{\overline{e}}\overline{P}=t_{\delta_1}t_{\delta_2}^{2}.
\end{eqnarray*}
This relation is the monodromy factorization for our genus-$3$ Lefschetz fibration. Observe that it admits two sections: one is of $(-1)$ self-intersection and the other is of $(-2)$. Capping off the boundary components, the factorization $Pt_{e'}P't_{C'}t_{\overline{e}}\overline{P}$ gives the following factorization of the identity in $\Mod_3$:
\begin{eqnarray}\label{mon}
t_{x_{1}}t_{x_{2}}t_{x_{3}} t_dt_{B_{2}}t_{e'}t_{x_{1}'}t_{x_{2}'}t_{x_{3}'}t_{d'}t_{B_{2}'}t_{C'}t_{\overline{e}}t_{\overline{x_1}}t_{\overline{x_2}}t_{\overline{x_3}}t_{\overline{d}}t_{\overline{B_2}}=1.
\end{eqnarray}

Let  $W$ be the positive factorization given in (\ref{mon})  and let $(X,f)$ be the corresponding genus-$3$ Lefschetz fibration which admits $18$ singular fibers with $(n,s)=(12,6)$. It is proved that the $4$-manifold $X_3$ is an exotic $\mathbb{C} P^{2}\#7\overline{\mathbb{C} P^{2}}$~\cite{b1}.

\subsection{Constructing a small nonhyperelliptic genus-$4$  Lefschetz fibration on a simply-connected $4$-manifold.}
\noindent

Consider the relation (\ref{mon}) in $\Mod_{3}^{2}$, which can be rewritten as 
\begin{eqnarray*}
W&=&Pt_{e'}P't_{C'}t_{\overline{e}}\overline{P}\\
&=&t_{\overline{e}}\overline{P}Pt_{e'}P't_{C'}\\
&=&t_{\overline{e}}t_{\overline{x_1}}t_{\overline{x_2}}t_{\overline{x_3}}t_{\overline{d}}t_{\overline{B_2}}t_{x_{1}}t_{x_{2}}t_{x_{3}} t_dt_{B_{2}}t_{e'}t_{x_{1}'}t_{x_{2}'}t_{x_{3}'}t_{d'}t_{B_{2}'}t_{C'}\\
&=&t_{\delta_1}t_{\delta_2}^{2},
\end{eqnarray*}
since the factorization $t_{\overline{e}}\overline{P}$ commutes with $t_{\delta_1}$ and $t_{\delta_2}$. We embed this relation into the surface $\Sigma_{4}^{2}$ so that the boundary parallel curve $\delta_2$ is mapped to $C''$, where the curves are as in Figure~\ref{TA4}. We also consider the Matsumoto's relation (\ref{mat2}) in $\Mod_{2}^{2}$
 \begin{eqnarray*}
  V_2=t_{C}^{2}t_{A_0} t_{A_1} t_{A_2} t_{B_0} t_{B_1} t_{B_2}=t_{\delta_1}t_{\delta_2},
 \end{eqnarray*}
and embed it into $\Sigma_{4}^{2}$ in such a way that the curves $ \delta_1=\partial \Sigma_{2}^{2}, A_i,B_i$ and $C$ are mapped to the curves $C',A_{i}'',B_{i}''$ and $C''$ for $i=0,1,2$, respectively, where the curves are depicted in Figure~\ref{TA4}. Thus, we get 
\begin{eqnarray*}
V_2W=\big(t_{C''}^{2} t_{A_0''} t_{A_1''} t_{A_2''}t_{B_{0}''} t_{B_{1}''} t_{B_{2}''}\big)\big(t_{\overline{e}}\overline{P}Pt_{e'}P't_{C'}\big)=\big(t_{C'}t_{\delta_2}\big) \big(t_{\delta_1}t_{C''}^{2}\big),
\end{eqnarray*}
which gives the relation
\begin{eqnarray*}
t_{C''}^{-2}\big(t_{C''}^{2} t_{A_0''} t_{A_1''} t_{A_2''}t_{B_{0}''} t_{B_{1}''} t_{B_{2}''}\big)\big(t_{\overline{e}}\overline{P}Pt_{e'}P't_{C'}\big)t_{C'}^{-1}=t_{\delta_1}t_{\delta_2},
\end{eqnarray*}
by the commutativity of Dehn twists $t_{C'}$, $t_{C''}$, $t_{\delta_1}$ and $t_{\delta_2}$. Therefore, we arrive the following relation in $\Mod_{4}^{2}$:
\begin{eqnarray*}
 t_{A_0''} t_{A_1''} t_{A_2''}t_{B_{0}''} t_{B_{1}''} t_{B_{2}''}t_{\overline{e}}\overline{P}Pt_{e'}P'=t_{\delta_1}t_{\delta_2},
\end{eqnarray*}
which gives the monodromy factorization for our first genus-$4$ Lefschetz fibration. Note that it admits two sections of self-intersection $(-1)$. By capping of the boundary components, we get the following factorization of identity in $\Mod_4$:
\begin{align}\label{mon4}
t_{A_0''} t_{A_1''} t_{A_2''}t_{B_{0}''} t_{B_{1}''} t_{B_{2}''}t_{\overline{e}}t_{\overline{x_1}}t_{\overline{x_2}}t_{\overline{x_3}}t_{\overline{d}}t_{\overline{B_2}}t_{x_{1}}t_{x_{2}}t_{x_{3}} t_dt_{B_{2}}t_{e'}t_{x_{1}'}t_{x_{2}'}t_{x_{3}'}t_{d'}t_{B_{2}'}=1.
\end{align}
\noindent

Let us denote the positive factorization (\ref{mon4}) by $W_1$ and the corresponding genus-$4$ Lefschetz fibration by $(X_1,f_1)$. It admits $23$ singular fibers with $(n,s)=(18,5)$. Observe that the Lefschetz fibration $(X_1,f_1)$ is nonhyperelliptic. Since it admits a section, the fundamental group 
$\pi_1(X_1)$ of the $4$-manifold $X_1$ is isomorphic to the quotient of $\pi_1(\Sigma_4)$ by the normal subgroup generated by the vanishing cycles of the Lefschetz fibration $(X_1,f_1)$.
\begin{figure}
\begin{center}
\scalebox{0.3}{\includegraphics{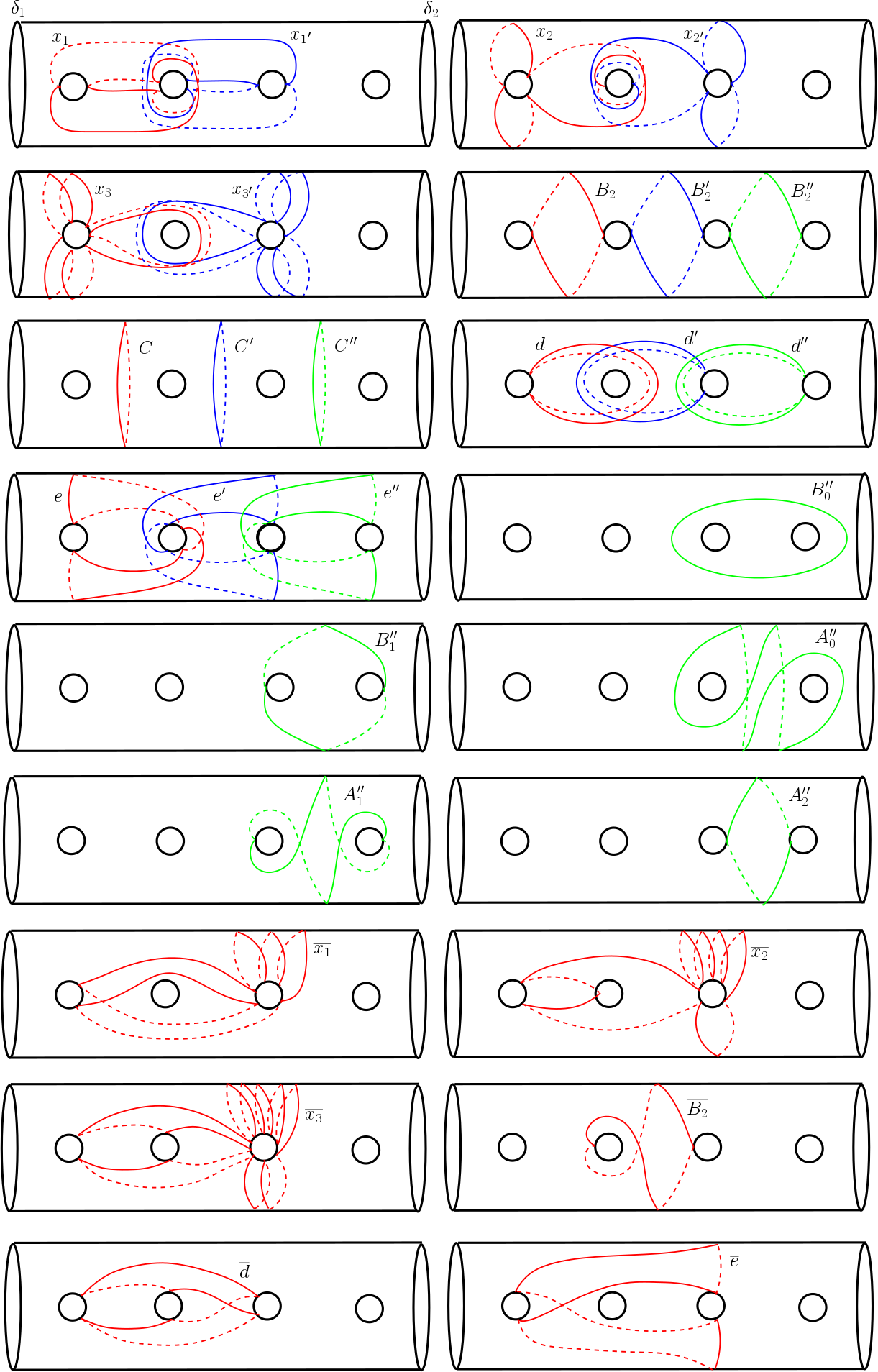}}
\caption{The curves on the surface $\Sigma_{4}^{2}$.}\label{TA4}
\end{center}
\end{figure}
\begin{figure}
\begin{center}
\scalebox{0.3}{\includegraphics{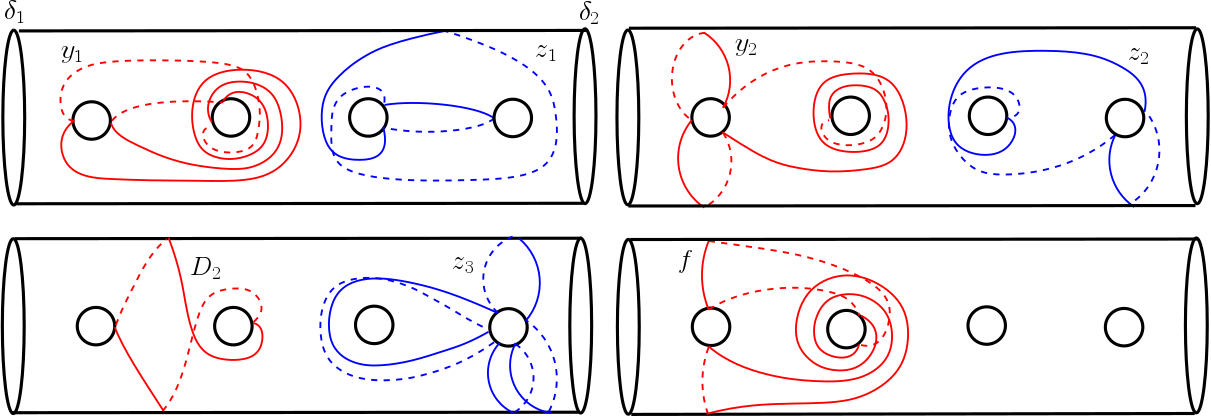}}
\caption{The curves on the surface $\Sigma_{4}^{2}$.}\label{TAH4}
\end{center}
\end{figure}

We now show that the $4$-manifold $X_1$ is simply-connected.

Consider the generators $a_i$, $b_i$ of $\pi(\Sigma_4)$ depicted in Figure~\ref{TA3} for $g=4$. Thus, $\pi_{1}(X_1)$ has a presentation with generators
 $a_1,a_2,a_3,a_4,b_1,b_2,b_3,b_4$ and with defining relations
\begin{eqnarray}
 \label{eq29}
b_4^{-1}b_3^{-1}b_2^{-1}b_1^{-1}(a_1b_1a_1^{-1})(a_2b_2a_2^{-1})(a_3b_3a_3^{-1})(a_4b_4a_4^{-1})=1.
\end{eqnarray}
  \[
  x_i=x_{i}'=\overline{x_i}=d=d'=\overline{d}= B_2=B_{2}'=\overline{B_2}=e'=\overline{e}=A_{i-1}''=B_{i-1}''=1, \ \ i=1,2, 3,
\]
where the curves shown in Figure~\ref{TA4}. One can get that $\pi_{1}(X_1)$ has the following relations (among many others):

\begin{eqnarray}
\label{eqv1}
x_1&=&b_1b_2a_2^{-1}a_1b_2a_2^{-1}a_1=1,\\
 \label{eqv2}
x_1'&=&b_2a_2^{-1}a_3b_3b_2a_2^{-1}a_3=1,\\
 \label{eqv3}
\overline{x_1}&=&a_1^{-1}a_2b_2a_2^{-1}a_3b_3a_3^{3}a_1^{-1}a_2b_2a_2^{-1}a_3=1,\\
\label{eqv4}
x_2&=&a_1^{2}b_1b_2^{2}a_2^{-1}a_1=1,\\
\label{eqv5}
d&=&b_2^{-1}a_1^{-1}a_2b_2a_2^{-1}a_1=1,\\
\label{eqv6}
d'&=&b_2a_3^{-1}a_2b_2^{-1}a_2^{-1}a_3=1,\\
\label{eqv7}
B_2&=&a_2^{-1}[a_1,b_1^{-1}]a_1^{-1}=1,\\
\label{eqv8}
B_2'&=&a_3^{-1}a_2b_2^{-1}a_2^{-1}[a_1,b_1^{-1}]b_2a_2^{-1}=1,\\
\label{eqv9}
\overline{B_2}&=&a_3^{-1}a_2b_2^{-1}a_2^{-1}[a_1,b_1^{-1}]b_2^{2}a_2^{-1}=1,\\
\label{eqv10}
B_{0}^{''}&=&b_3b_4=1,\\
\label{eqv11}
B_{1}^{''}&=&a_4^{-1}b_4^{-1}b_{3}^{-1}a_3^{-1}=1,\\
\label{eqv12}
B_{2}^{''}&=&a_{3}^{-1}[a_4,b_4]a_{4}^{-1}=1=1.
\end{eqnarray}
The relations (\ref{eqv8}) and (\ref{eqv9}) imply that $b_2=1$. Thus, it follows from the relations (\ref{eqv5}) and (\ref{eqv6}) that we get $[a_1,a_2]=[a_2,a_3]=1$, respectively. From these identities, the relations (\ref{eqv1}) and (\ref{eqv4}) give the following identites: 
\[
b_1=a_1^{-2}a_2^{2} \textrm{ and }b_1=a_1^{-3}a_2,
\]
which imply that $a_1=a_2^{-1}$ and so $b_1=a_1^{-4}$. Moreover, the relation (\ref{eqv7}) yields $[a_1,b_1]=1$. The relation (\ref{eqv10}) implies that $b_3=b_4^{-1}$. Thus, using the relations (\ref{eqv11})  and (\ref{eqv12}), we have $[a_4,b_4]=1$. By the relation (\ref{eq29}), one can get $[a_3,b_3]=1$. Using also the relations (\ref{eqv8}) and (\ref{eqv12}), the following identities hold:
\[
a_1=a_2^{-1}=a_3=a_4^{-1}.
\]
Moreover, by considering the relation (\ref{eqv2}), one can obtain that $b_3=a_1^{-4}$, which implies that $b_1=b_3=b_4^{-1}$. Hence, all relations show that $\pi_1(X_1)$ is generated by a single element, say $a_1$. However, the relation (\ref{eqv3}) become $b_3=a_1^{-3}$. Therefore, we conclude that $a_1=1$ by the fact that $b_3=a_1^{-4}$. Consequently, the fundamental group $\pi_1(X_1)$ is trivial. 
\begin{theorem}\label{thmex4}
The $4$-manifold $X_1$ is an exotic copy of $\mathbb{C} P^{2}\#8\overline{\mathbb{C} P^{2}}$.
\end{theorem}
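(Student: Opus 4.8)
The plan is to unpack the two claims contained in the phrase ``exotic copy'': that $X_1$ is \emph{homeomorphic} to $\mathbb{C} P^{2}\#8\overline{\mathbb{C} P^{2}}$, and that it is \emph{not diffeomorphic} to it. The first claim will follow from Freedman's classification of simply-connected $4$-manifolds, once we know $\pi_1(X_1)=1$ (already established) together with the characteristic numbers and the parity of the intersection form of $X_1$. The second, smooth, claim will rest on Seiberg--Witten theory, exploiting the fact that $X_1$ is symplectic (via Gompf~\cite{gs}, as $g=4\ge 2$) while the rational surface $\mathbb{C} P^{2}\#8\overline{\mathbb{C} P^{2}}$ admits a metric of positive scalar curvature.

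First I would compute the invariants of $X_1$. Since $(X_1,f_1)$ is a genus-$4$ fibration with $(n,s)=(18,5)$, the formula $e(M)=4-4g+n+s$ gives $e(X_1)=4-16+18+5=11$. For the signature I would invoke the Endo--Nagami method (Theorem~\ref{signen}): the factorization $W_1$ of \eqref{mon4} is assembled by breeding the genus-$3$ word $W$ with the Matsumoto word $V_2$, so $I_4(W_1)$ should split as the sum of the contribution $-4$ of the Matsumoto relator $(t_{B_0}t_{B_1}t_{B_2}t_C)^2$, the contribution $-1$ of each separating vanishing cycle, and the remaining nonseparating pieces; carrying out this bookkeeping (or, equivalently, running Ozbagci's algorithm) should yield $\sigma(X_1)=-7$. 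As a consistency check, the inequality \eqref{eq2} for simply-connected total spaces gives $\sigma(X_1)\le n-s-4g=18-5-16=-3$, which $-7$ respects. With $e(X_1)=11$ and $\sigma(X_1)=-7$ one has $b_2(X_1)=9$, $b_2^+(X_1)=1$, $b_2^-(X_1)=8$; since $\sigma(X_1)=-7\not\equiv 0 \pmod 8$ the form cannot be even, hence it is the odd indefinite form $\langle 1\rangle\oplus 8\langle -1\rangle$, and Freedman's theorem yields a homeomorphism $X_1\cong \mathbb{C} P^{2}\#8\overline{\mathbb{C} P^{2}}$.

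It remains to distinguish the two smoothly. Because $\mathbb{C} P^{2}\#8\overline{\mathbb{C} P^{2}}$ carries a positive scalar curvature metric (Fubini--Study connect-summed with blow-ups), all of its Seiberg--Witten invariants vanish; by diffeomorphism invariance it therefore suffices to show that $X_1$ is \emph{not} rational or ruled, equivalently that its symplectic Kodaira dimension is non-negative, for then $X_1$ admits no positive scalar curvature metric. Conceptually I would pass to a symplectic minimal model: the two sections of self-intersection $-1$ are disjointly embedded $(-1)$-spheres, so $X_1$ is non-minimal, and $c_1^2(X_1)=2e(X_1)+3\sigma(X_1)=1$, so blowing down produces a minimal symplectic $Z$ with $c_1^2(Z)=c_1^2(X_1)+m\ge 3$ for the number $m\ge 2$ of exceptional classes removed. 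If one can certify that $Z$ is not a minimal rational or ruled surface, then $\kappa(Z)\ge 0$, hence $\kappa(X_1)\ge 0$, and the exoticness follows.

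The delicate point, and the main obstacle, is exactly this non-rationality in the $b_2^+=1$ regime, where Seiberg--Witten invariants are chamber-dependent and a naive count is inconclusive: the topological data alone do not exclude the a priori possibility that the minimal model of $X_1$ is $\mathbb{C} P^{2}$ itself (which would make $X_1$ the \emph{standard} rational surface), so merely blowing down the two visible sections and comparing $b_2$ does not settle the question. To close this gap rigorously I would either extract a direct Seiberg--Witten obstruction for $X_1$ in the Taubes chamber (using Taubes' theorem that the canonical class of a symplectic $4$-manifold with $b_2^+=1$ is a basic class, together with the Li--Liu wall-crossing formula to control both chambers), or propagate the non-triviality of the Seiberg--Witten invariants of the exotic building block $X_3$ (the genus-$3$ exotic $\mathbb{C} P^{2}\#7\overline{\mathbb{C} P^{2}}$ of~\cite{b1}) through the breeding/symplectic-sum construction. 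Establishing that $X_1$ genuinely has non-negative Kodaira dimension, rather than being merely homeomorphic to a rational surface, is where the real content of the theorem lies.
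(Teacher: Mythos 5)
The homeomorphism half of your argument (Euler characteristic, Endo--Nagami signature, $(b_2^+,b_2^-)=(1,8)$, odd intersection form, Freedman) is the same as the paper's, although your signature bookkeeping is left as ``should yield $-7$''; the paper pins it down by additivity over the relators used in the breeding, namely $(-4)$ for the Matsumoto relator $V_2$, $(-6)$ for the genus-$3$ word $W$, minus the $(-3)$ contributed by the three separating Dehn twists $t_{C''}^{2}$ and $t_{C'}$ that are cancelled in the construction, giving $-4-6+3=-7$.

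The genuine gap is in the smooth distinction. You correctly identify that the hard point is to rule out $X_1$ being the standard rational surface, but you then only sketch two possible strategies (a chamber-by-chamber Seiberg--Witten/Taubes--Li--Liu analysis, or propagating invariants from the genus-$3$ block) and explicitly leave the step open, so as written the proof of exoticness is not complete. The paper closes this step with a one-line argument of a different flavor: by \cite[Lemma 2]{b1}, the rational surface $\mathbb{C} P^{2}\#8\overline{\mathbb{C} P^{2}}$ admits no genus-$4$ Lefschetz fibration at all, whereas $X_1$ does by construction; a diffeomorphism would transport the fibration, so $X_1$ cannot be diffeomorphic to $\mathbb{C} P^{2}\#8\overline{\mathbb{C} P^{2}}$. (The ingredients you name --- positive scalar curvature on the rational surface, vanishing of its Seiberg--Witten invariants, Taubes' nonvanishing for symplectic manifolds --- are essentially what goes into the proof of that cited lemma, but the point is that the lemma is already packaged as a statement about nonexistence of such fibrations on the rational surface, so no further minimality or Kodaira-dimension analysis of $X_1$ itself is needed.) You should either invoke that lemma directly or actually carry out one of the Seiberg--Witten arguments you propose; in the form submitted, the exoticness claim is asserted rather than proved.
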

\begin{proof}
The Euler characteristic $e(X_1)$ of $X_1$, is given by
\begin{eqnarray*}
e(X_1)&=&4-4g+n+s\\
&=&4-4(4)+18+5=11
\end{eqnarray*}
for $(n,s)=(18,5)$. We compute the signature $\sigma(X_1)$ of $X_1$ using Endo and Nagami's method. We obtain the monodromy of $X_1$ by breeding the Matsumoto's relation with the monodromy $W$, and then by cancelling the Dehn twists $t_{C''}^{2}$ and $t_{C'}$. Since the signature is additive, Theorem~\ref{signen} implies that $\sigma(X_1)$ is given by the sum of the signature of Matsumoto's relator, which is $-4$, the signature of relator coming from the monodromy $W_3$, which is $-6$ and finally the difference of signatures of three separating simple closed curve, which is $-3$. Hence, $\sigma(X_1)$ satisfies the following equality:
\begin{eqnarray*}
\sigma(X_1)=(-4)+(-6)-(-3)=-7.
\end{eqnarray*}
By the fact that  $X_1$ is simply-connected, it can be concluded that 
\begin{eqnarray*}
e(X_1)&=&2-2b_1(X_1)+b_{2}^{+}(X_1)+b_{2}^{-}(X_1)=11\\
&=&2+b_{2}^{+}(X_1)+b_{2}^{-}(X_1),
\end{eqnarray*}
and
\[
\sigma(X_1)=-7=b_{2}^{+}(X_1)-b_{2}^{-}(X_1),
\]
which give that $(b_2^{+}(X_1),b_2^{-}(X_1))=(1,8)$. By Freedman's classification, the $4$-manifold $X_1$ is homeomorphic to the rational surface $\mathbb{C} P^{2}\#8\overline{\mathbb{C} P^{2}}$. However the $4$-manifold $\mathbb{C} P^{2}\#8\overline{\mathbb{C} P^{2}}$ does not admit a genus-$4$ Lefschetz fibration ~\cite[Lemma $2$]{b1}. Hence it cannot be diffeomorphic to $X_1$, which implies that $X_1$ is an exotic copy of $\mathbb{C} P^{2}\#8\overline{\mathbb{C} P^{2}}$. 
\end{proof}
\subsection{Constructing a small hyperelliptic genus-$4$ Lefschetz fibration on a simply-connected $4$-manifold}
Consider the generalized Matsumoto's Lefschetz fibration for $g=4$ with the monodromy factorization (\ref{mat2})
\begin{eqnarray}\label{mat4}
V_4=t_{C'}^{2}(t_{\alpha_0}t_{\alpha_1}t_{\alpha_2}t_{\alpha_3}t_{\alpha_4})(t_{\beta_0}t_{\beta_1}t_{\beta_1}t_{\beta_3}t_{\beta_4})
=t_{\delta_1}t_{\delta_2},
\end{eqnarray}
where we denote the simple closed curves $A_i$, $B_i$ and $C$ by $\alpha_i$, $\beta_i$ and $C'$, respectively, to distuguish them from some of which appearing before.

We then embed the relation (\ref{eqw}), $T=t_{\delta_1}t_{\delta_2}$, in $\Mod_{2}^{2}$ into $\Mod_{4}^{2}$ in such a way that the boundary parallel curve $\delta_2$ shown in Figure~\ref{BK} is mapped to the curve $C'$ shown in Figure~\ref{TA4}. In this case, we get the relation $T=t_{\delta_1}t_{C'}$ in $\Mod_{4}^{2}$. The conjugation of this relation by $\phi=t_{b_2}$ gives the following relation:
\begin{eqnarray}
T^{\phi}&=&t_{\phi(e)}t_{\phi(x_1)}t_{\phi(x_2)}t_{\phi(x_3)}t_{\phi(d)}t_{\phi(B_2)}t_{\phi(C)}=t_{\phi(\delta_1)}t_{\phi(C')}\nonumber\\
&=&t_ft_{y_1}t_{y_2}t_{x_3}t_dt_{D_2}t_C=t_{\delta_1}t_{C'},\label{Tphi}
\end{eqnarray}
 where all curves containing the relation (\ref{Tphi}) are depicted in Figures~\ref{TA4} and~\ref{TAH4} (here we denote the curves $\phi(e)$, $\phi(x_1)$, $\phi(x_2)$ and $\phi(B_2)$ by $f$, $y_1$, $y_2$ and $D_2$, respectively).
 
In a similar way, we consider the relation (\ref{eqwtilde}), $T'=t_{\delta_1}t_{\delta_2}$, in $\Mod_{2}^{2}$, where the curves that appear in the factorization $T'$ are shown in Figures~\ref{BK} and~\ref{TC}. By conjugating this relation with the diffeomorphism $\psi=t_{a_2}^{-1}$, we get the relation
\begin{eqnarray*}
(T')^{\psi}&=&t_{\psi(e')}t_{\psi(x_1')}t_{\psi(x_2')}t_{\psi(x_3')}t_{\phi(d')}t_{\psi(B_2)}t_{\psi(C)}=t_{\psi(\delta_1)}t_{\psi(\delta_2)}\\
&=&t_{e'}t_{\psi(x_1')}t_{\psi(x_2')}t_{\psi(x_3')}t_{d'}t_{B_2}t_{C}=t_{\delta_1}t_{\delta_2}
\end{eqnarray*}
in $\Mod_{2}^{2}$. Now let us embed this relation into $\Mod_{4}^{2}$ so that the  curves $\delta_1$, $e',\psi(x_i')$, $d'$, $B_2$ and $C$ are mapped to the curves $C'$, $e''$, $z_i$, $d''$, $B_2''$ and $C''$, respectively, given in Figures~\ref{TA4} and \ref{TAH4}. Thus, we get the following relation in $\Mod_{4}^{2}$: 
\begin{eqnarray}\label{Tpsi}
t_{e''}t_{z_1}t_{z_2}t_{z_3}t_{d''}t_{B_2''}t_{C''}=t_{C'}t_{\delta_2}.
\end{eqnarray}
The relations (\ref{mat4}), (\ref{Tphi}) and  (\ref{Tpsi}) give rise to the following relation:
\begin{eqnarray*}
t_{C'}^{2}(t_{\alpha_0}t_{\alpha_1}t_{\alpha_2}t_{\alpha_3}t_{\alpha_4})(t_{\beta_0}t_{\beta_1}t_{\beta_1}t_{\beta_3}t_{\beta_4})(t_ft_{y_1}t_{y_2}t_{x_3}t_dt_{D_2}t_C)(t_{e''}t_{z_1}t_{z_2}t_{z_3}t_{d''}t_{B_2''}t_{C''})=(t_{\delta_1}t_{\delta_2})(t_{\delta_1}t_{C'})(t_{C'}t_{\delta_2}),
\end{eqnarray*}
which can be written as 
\begin{eqnarray*}
t_{C'}^{-2}t_{C'}^{2}(t_{\alpha_0}t_{\alpha_1}t_{\alpha_2}t_{\alpha_3}t_{\alpha_4})(t_{\beta_0}t_{\beta_1}t_{\beta_1}t_{\beta_3}t_{\beta_4})(t_ft_{y_1}t_{y_2}t_{x_3}t_dt_{D_2}t_C)(t_{e''}t_{z_1}t_{z_2}t_{z_3}t_{d''}t_{B_2''}t_{C''})=t_{\delta_1}^{2}t_{\delta_2}^{2}
\end{eqnarray*}
since $t_{C'}$, $t_{\delta_1}$ and $t_{\delta_2}$ all commute with each other. Therefore, we have the following relation in $\Mod_{4}^{2}$:
\begin{eqnarray*}
(t_{\alpha_0}t_{\alpha_1}t_{\alpha_2}t_{\alpha_3}t_{\alpha_4})(t_{\beta_0}t_{\beta_1}t_{\beta_1}t_{\beta_3}t_{\beta_4})(t_ft_{y_1}t_{y_2}t_{x_3}t_dt_{D_2}t_C)(t_{e''}t_{z_1}t_{z_2}t_{z_3}t_{d''}t_{B_2''}t_{C''})=t_{\delta_1}^{2}t_{\delta_2}^{2}.
\end{eqnarray*}
By capping off both boundary components $\delta_1$ and $\delta_2$, we get the following factorization of identity in $\Mod_4:$
\begin{eqnarray}\label{monh4}
(t_{\alpha_0}t_{\alpha_1}t_{\alpha_2}t_{\alpha_3}t_{\alpha_4})(t_{\beta_0}t_{\beta_1}t_{\beta_1}t_{\beta_3}t_{\beta_4})(t_ft_{y_1}t_{y_2}t_{x_3}t_dt_{D_2}t_C)(t_{e''}t_{z_1}t_{z_2}t_{z_3}t_{d''}t_{B_2''}t_{C''})=1.
\end{eqnarray}
Let $W_2$ denote the positive factorization (\ref{monh4}) and let $(X_2,f_2)$ be the genus-$4$ Lefschetz fibration with the monodromy $W_2$. It admits $24$ singular fibers with $(n,s)=(18,6)$. Since its vanishing cycles are invariant under the hyperelliptic involution $\iota$, the Lefschetz fibration $(X_2,f_2)$ is hyperelliptic. Moreover it admits two sections of $(-2)$ self-intersection.

We now compute the fundamental group $\pi_1(X_2)$ of the $4$-manifold $X_2$. Since the Lefschetz fibration $(X_2,f_2)$ admits a section, its fundamental group 
$\pi_1(X_2)$  is isomorphic to the quotient of $\pi_1(\Sigma_4)$ by the normal subgroup generated by its vanishing cycles.

Consider the generators $a_i$, $b_i$ of $\pi(\Sigma_4)$ shown in Figure~\ref{TA3}. Thus, $\pi_{1}(X_2)$ has a presentation with generators
 $a_1,a_2,a_3,a_4,b_1,b_2,b_3,b_4$ and with defining relations
\begin{eqnarray}
 \label{eq29}
b_4^{-1}b_3^{-1}b_2^{-1}b_1^{-1}(a_1b_1a_1^{-1})(a_2b_2a_2^{-1})(a_3b_3a_3^{-1})(a_4b_4a_4^{-1})=1.
\end{eqnarray}
  \[
  \alpha_i=\beta_{i}=z_j=y_1=y_2=x_3=f=d=D_2=C=e''=d''=B_2''=C''=1, \ \ i=0,\dots,4 \textrm{ and } j=1,2,3,
\]
where the curves shown in Figures~\ref{TA4} and \ref{TAH4}. One can get that $\pi_{1}(X_2)$ has the following relations (among many others):
\begin{eqnarray}
\beta_0 &=& b_1b_2b_3b_4 =1,\label{eqh1}\\
\beta_1 &=& a_1b_1b_2b_3b_4a_4=1, \label{eqh2}\\
\beta_2 &=& a_1b_2b_3b_4a_4b_4^{-1}=1,\label{eqh3}\\
\beta_3 &=& a_2b_2b_3[b_4,a_4]a_3=1,\label{eqh4}\\
\beta_4 &=&a_3^{-1}a_2b_2^{-1}a_2^{-1}[a_1,b_1^{-1}]b_2a_2^{-1}=1,\label{eqh5}\\
y_1&=&b_1b_{2}^{2}a_{2}^{-1}a_1b_{2}^{2}a_2^{-1}a_1, \label{eqh6}\\
D_2&=&b_2a_2^{-1}[a_1,b_1^{-1}]a_1^{-1}=1, \label{eqh7} \\
C&=&[a_1,b_1]=1,\label{eqhc}\\
z_1&=&b_3a_3^{-1}a_4b_4a_4^{-1}b_3a_3^{-1}a_4=1, \label{eqh8}\\
C''&=&[a_4,b_4]=1, \label{eqh9} \\
B_2''&=&a_4^{-1}a_3b_3^{-1}a_3^{-1}a_2b_2^{-1}a_2^{-1}[a_1,b_1^{-1}]b_2b_3a_3^{-1}=1. \label{eqh10}
\end{eqnarray}
The relations (\ref{eqh1}) and (\ref{eqh2}) imply that $a_1a_4=1$. Thus, we get $b_2b_3=1$ using the relation  (\ref{eqh9}) and (\ref{eqh3}). This gives the relations  $b_1b_4=1$ and $a_2a_3=1$ by the relations (\ref{eqh1}) and (\ref{eqh4}), respectively. We then have $[a_2,b_2]=1$ from (\ref{eqh5}) and (\ref{eqhc}). Since $a_2=a_3^{-1}$ and $b_2=b_3^{-1}$, we conclude that $[a_3,b_3]=1$. Together with the relation (\ref{eqh10}), we get $a_3a_4=1$. Hence we have $a_1=a_2^{-1}=a_3=a_4^{-1}$. This implies that $b_2=b_3=1$ using the relations (\ref{eqh7}) and (\ref{eqhc}). From this, we obtain the relations $b_1=a_1^{-4}$ and $b_1=a_1^{-3}$ using the relations  (\ref{eqh6}) and (\ref{eqh8}), respectively. This implies that $a_1=1$ and so $b_1=b_4=1$. We therefore get $\pi_{1}(X_2)=1$.

\begin{theorem}\label{thmexh4}
The $4$-manifold $X_2$ is an exotic copy of $\mathbb{C} P^{2}\#9\overline{\mathbb{C} P^{2}}$.
\end{theorem}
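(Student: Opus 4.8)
The plan is to mirror the proof of Theorem~\ref{thmex4}: determine the homeomorphism type of $X_2$ from its Euler characteristic and signature via Freedman, and then rule out a diffeomorphism to the standard rational surface. Since the monodromy $W_2$ has $(n,s)=(18,6)$ and $g=4$, the Euler characteristic is
\[
e(X_2)=4-4\cdot 4+18+6=12.
\]
For the signature I would use Lemma~\ref{lem2} rather than the Endo--Nagami method, since $(X_2,f_2)$ is hyperelliptic. First I record where the six separating vanishing cycles live: the generalized Matsumoto block $V_4$ contributes only nonseparating cycles to $W_2$, its unique separating cycle $t_{C'}$ having been cancelled, so all six separating cycles come from the two embedded copies of the smallest genus-$2$ factorization (three each), and each is of type $1$. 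Thus $s_1=6$, $s_2=0$, and Lemma~\ref{lem2} with $g=4$ gives
\[
\sigma(X_2)=-\frac{5}{9}\cdot 18+\Big(\frac{4\cdot 1\cdot 3}{9}-1\Big)\cdot 6=-10+2=-8.
\]
In fact one can bypass identifying the types altogether: writing $s=s_1+s_2=6$, Lemma~\ref{lem2} yields $\sigma(X_2)=-10+(3s_1+7s_2)/9=-8+4s_2/9$, and integrality of the signature forces $s_2=0$, hence $\sigma(X_2)=-8$.

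Since $\pi_1(X_2)=1$ has already been established, $X_2$ is a closed simply-connected $4$-manifold with
\[
b_2^{+}(X_2)+b_2^{-}(X_2)=e(X_2)-2=10,\qquad b_2^{+}(X_2)-b_2^{-}(X_2)=\sigma(X_2)=-8,
\]
so $(b_2^{+}(X_2),b_2^{-}(X_2))=(1,9)$. Its intersection form is indefinite, and it must be odd: a closed spin $4$-manifold has signature divisible by $16$ by Rokhlin's theorem, whereas $\sigma(X_2)=-8$. By Freedman's classification, $X_2$ is therefore homeomorphic to $\mathbb{C} P^{2}\#9\overline{\mathbb{C} P^{2}}$.

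It remains to show $X_2$ is not diffeomorphic to $\mathbb{C} P^{2}\#9\overline{\mathbb{C} P^{2}}$. By Gompf's theorem the genus-$4$ fibration makes $X_2$ a symplectic $4$-manifold. The key point, and the step I expect to be the main obstacle, is \emph{minimality}: I would argue that $X_2$ is a minimal symplectic $4$-manifold, using that $(X_2,f_2)$ is a relatively minimal Lefschetz fibration of genus $g=4\ge 2$ whose sections have square $-2$, so that by Usher's theorem on the minimality of genus-$\ge 2$ Lefschetz fibrations no exceptional sphere occurs. Granting minimality, the computation $c_1^2(X_2)=2e(X_2)+3\sigma(X_2)=24-24=0$ together with $K_{X_2}\cdot[\mathrm{fiber}]=2g-2=6>0$ shows that the symplectic Kodaira dimension of $X_2$ is nonnegative (indeed equal to $1$), whereas the rational surface $\mathbb{C} P^{2}\#9\overline{\mathbb{C} P^{2}}$ has Kodaira dimension $-\infty$. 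Since the symplectic Kodaira dimension is a diffeomorphism invariant, $X_2$ cannot be diffeomorphic to $\mathbb{C} P^{2}\#9\overline{\mathbb{C} P^{2}}$. Equivalently, being rational, $\mathbb{C} P^{2}\#9\overline{\mathbb{C} P^{2}}$ is nonminimal and so, unlike $X_2$, carries no minimal genus-$4$ Lefschetz fibration; this is the exact analogue of the step invoking \cite[Lemma $2$]{b1} in Theorem~\ref{thmex4}. Hence $X_2$ is an exotic copy of $\mathbb{C} P^{2}\#9\overline{\mathbb{C} P^{2}}$, and the only delicate input is the minimality of $X_2$, everything else being bookkeeping parallel to Theorem~\ref{thmex4}.
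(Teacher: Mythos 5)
Your proposal is correct and follows essentially the same route as the paper: compute $e(X_2)=12$ and $\sigma(X_2)=-8$ via the hyperelliptic signature formula of Lemma~\ref{lem2} with $(n,s_1,s_2)=(18,6,0)$, apply Freedman to identify the homeomorphism type from $(b_2^{+},b_2^{-})=(1,9)$, and rule out a diffeomorphism because $\mathbb{C} P^{2}\#9\overline{\mathbb{C} P^{2}}$ admits no genus-$4$ Lefschetz fibration. Your supplementary justifications (the integrality argument forcing $s_2=0$, Rokhlin for oddness of the form, and the Usher/Kodaira-dimension unpacking of the final step) only make explicit what the paper takes as given or delegates to \cite[Lemma 2]{b1}.
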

\begin{proof}
The Euler characteristic $e(X_2)$ of $X_2$ is  given by
\begin{eqnarray*}
e(X_2)&=&4-4g+n+s_1+s_2\\
&=&4-4(4)+18+6+0=12
\end{eqnarray*}
for $(n,s_1,s_2)=(18,6,0)$, where $s=s_1+s_2$. Since the Lefschetz fibration $(X_2,f_2)$ is hyperelliptic, we compute the signature $\sigma(X_2)$ of $X_2$ using the signature formula given in Lemma~\ref{lem2}. Thus, the signature $\sigma(X_2)$ is given by
\begin{eqnarray*}
\sigma(X_2)=\frac{1}{9}(-5n+3s_1+7s_2)=-8.
\end{eqnarray*}
It follows from $\pi_1(X_2)=1$ that one can conclude that 
\begin{eqnarray*}
e(X_2)&=&2-2b_1(X_2)+b_{2}^{+}(X_2)+b_{2}^{-}(X_2)\\
&=&2+b_{2}^{+}(X_2)+b_{2}^{-}(X_2)=12,
\end{eqnarray*}
and
\[
\sigma(X_2)=b_{2}^{+}(X_2)-b_{2}^{-}(X_2)=-8,
\]
which imply that $(b_2^{+}(X_2),b_2^{-}(X_2))=(1,9)$. By Freedman's classification, the $4$-manifold $X_2$ is homeomorphic to the $4$-manifold $\mathbb{C} P^{2}\#9\overline{\mathbb{C} P^{2}}$. However the rational surface $\mathbb{C} P^{2}\#9\overline{\mathbb{C} P^{2}}$ does not admit a genus-$4$ Lefschetz fibration ~\cite[Lemma $2$]{b1}. Thus it cannot be diffeomorphic to $X_2$. We therefore conclude that $X_2$ is an exotic $\mathbb{C} P^{2}\#9\overline{\mathbb{C} P^{2}}$. 
\end{proof}
\section{The minimal number of singular fibers in Lefschetz fibrations on simply-connected $4$-manifolds}\label{S4}
In this section, we examine the minimal number of singular fibers in Lefschetz fibrations on simply connected $4$-manifolds.  We remind that $N_g$ (respectively $M_g$) denotes the minimal number of singular fibers in all genus-$g$ (respectively hyperelliptic) Lefschetz fibratons on a simply connected $4$-manifold. Also, let us recall that $n$ and $s$ denote the number of non-separating and separating singular fibers in a genus-$g$ Lefschetz fibration, respectively. In the following lemma, based on the Cadavid's signature inequality (\ref{eq2}), we arrive a lower bound for $n$.
\begin{lemma}
\label{lemm1}
 Let $X$ be a simply-connected $4$-manifold that admits a genus-$g$ Lefschetz fibration over $\mathbb{S}^2$, then  $n\geq4g$.
\end{lemma}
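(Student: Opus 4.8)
The goal is to show that for any genus-$g$ Lefschetz fibration over $\mathbb{S}^2$ on a simply-connected $4$-manifold $X$, the number $n$ of non-separating vanishing cycles satisfies $n \geq 4g$. The plan is to combine two facts that are already available in the excerpt: Cadavid's signature inequality specialized to the simply-connected case, namely inequality~(\ref{eq2}), which states that $\sigma(X) \leq n - s - 4g$, together with the elementary topological lower bound on the signature coming from the handlebody structure of a Lefschetz fibration.

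First I would invoke the remark preceding Lemma~\ref{lem1}, which records that every nontrivial Lefschetz fibration $f:X \to \mathbb{S}^2$ satisfies $\sigma(X) \leq n - s - 2(2g - b_1(X))$, and that when $X$ is simply-connected this becomes~(\ref{eq2}): $\sigma(X) \leq n - s - 4g$. This is an upper bound on the signature. To extract a lower bound on $n$, I need a matching lower bound on $\sigma(X)$. The natural source is the fact that $b_2^{-}(X) \geq 0$ always holds, which yields $\sigma(X) = b_2^{+}(X) - b_2^{-}(X)$ and, more usefully for a lower bound, I would instead consider the Euler characteristic relation $e(X) = 4 - 4g + n + s$ together with $\sigma(X) = b_2^+ - b_2^-$ and $e(X) = 2 + b_2^+ + b_2^-$ for the simply-connected case.

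The cleanest route is to eliminate the Betti numbers. Writing $e(X) = 2 + b_2^+(X) + b_2^-(X)$ and $\sigma(X) = b_2^+(X) - b_2^-(X)$, I would add these to get $e(X) + \sigma(X) = 2 + 2b_2^+(X) \geq 2$, so $\sigma(X) \geq 2 - e(X)$. Substituting $e(X) = 4 - 4g + n + s$ gives the lower bound $\sigma(X) \geq 2 - (4 - 4g + n + s) = 4g - 2 - n - s$. Now combining this with Cadavid's upper bound $\sigma(X) \leq n - s - 4g$ produces $4g - 2 - n - s \leq n - s - 4g$, which simplifies (the $-s$ cancels) to $8g - 2 \leq 2n$, i.e. $n \geq 4g - 1$. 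Since $n$ is an integer and the divisibility considerations or parity may not immediately close the last gap, I would sharpen using $b_2^+(X) \geq 1$: for a Lefschetz fibration over $\mathbb{S}^2$ the fiber class and a section give $b_2^+(X) \geq 1$, hence $e(X) + \sigma(X) = 2 + 2b_2^+(X) \geq 4$, improving the lower bound to $\sigma(X) \geq 4g - n - s$, and combining with $\sigma(X) \leq n - s - 4g$ yields $8g \leq 2n$, that is $n \geq 4g$, as desired.

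The main obstacle I anticipate is justifying the strict positivity $b_2^+(X) \geq 1$ rigorously rather than merely $b_2^+(X) \geq 0$: one extra unit in $b_2^+$ is exactly what upgrades $n \geq 4g - 1$ to the sharp bound $n \geq 4g$. For a relatively minimal Lefschetz fibration over $\mathbb{S}^2$ with fiber genus $g \geq 1$, this positivity follows because the fiber represents a homology class of non-negative self-intersection together with the existence of a symplectic form (via Gompf, when $g \geq 2$) forcing $b_2^+ \geq 1$; alternatively it is visible from the fact that a nontrivial such fibration cannot be negative-definite. I would make sure to handle the hypothesis carefully so that this positivity is guaranteed under the stated assumptions, since the entire sharpness of the bound rests on this single point.
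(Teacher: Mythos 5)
Your proof is correct and follows essentially the same route as the paper: both arguments combine Cadavid's inequality $\sigma(X)\leq n-s-4g$ with the observation that $e(X)+\sigma(X)=2+2b_{2}^{+}(X)\geq 4$ (equivalently $\chi_{h}(X)\geq 1$) and the Euler characteristic formula $e(X)=4-4g+n+s$ to conclude $n\geq 4g$. The only cosmetic difference is in how $b_{2}^{+}(X)\geq 1$ is secured: the paper deduces it from the integrality of $\chi_{h}(X)=\frac{1+b_{2}^{+}(X)}{2}$, which forces $b_{2}^{+}(X)$ to be odd, whereas you appeal directly to the symplectic structure on $X$; both justifications ultimately rest on the total space being symplectic.
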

\begin{proof}
It follows from $b_1(X)=0$ that
the Euler characteristic of $X$ is given by
\begin{eqnarray*}
e(X)&=&2-2b_1(X)+b_{2}^{+}(X)+b_{2}^{-}(X)\\
&=&2+b_{2}^{+}(X)+b_{2}^{-}(X).
\end{eqnarray*}
Hence the invariant $\chi_{h}(X)$ equals to the quotient
\begin{eqnarray*}
\chi_{h}(X)&=&\dfrac{e(X)+\sigma(X)}{4}\\
&=&\dfrac{(2+b_{2}^{+}(X)+b_{2}^{-}(X))+(b_{2}^{+}(X)-b_{2}^{-}(X))}{4}\\
&=&\dfrac{1+b_{2}^{+}(X)}{2}.
\end{eqnarray*}
Since the invariant $\chi_{h}(X)$ is an integer, we conclude that $\chi_{h}(X)\geq1$. On the other hand, the inequality (\ref{eq2}) implies that
\begin{eqnarray*}
1\leq \chi_{h}(X)&=&\dfrac{e(X)+\sigma(X)}{4}\\
&\leq&\dfrac{(4-4g+n+s)+(n-s-4g)}{4}\\
&=&\dfrac{2n-8g+4}{4},
\end{eqnarray*}
which gives the required inequality.
\end{proof}
\begin{remark}\label{rmk}
One can observe that any genus-$g\geq2$ hyperelliptic Lefschetz fibration on a simply-connected $4$-manifold $X$ must satisfy $n+s>4g$. Otherwise such a fibration admits $n+s=4g$ singular fibers (in this case $s=0$ by Lemma~\ref{lemm1}), then $(n,s)=(4g,0)$ does not satisfy the equation in Lemma~\ref{lem1}, which leads a contradiction. Hence, we conclude that $M_g\geq 4g+1$.
\end{remark}
\begin{remark} 
If there exists a genus-$g\geq2$ Lefschetz fibration on a simply-connected $4$-manifold $X$ with $n=4g$, then it follows from the proof of Lemma~\ref{lemm1} that $\chi_h(X)=1$. Therefore, $X$ satisfies $(b_{2}^{+}(X),b_{2}^{-}(X))=(1,1+s)$ and $(e(X),\sigma(X))=(4+s,-s)$. Now if $\sigma(X)=-s\not\equiv 0 \pmod{16}$, then $X$ is homeomorphic to $\mathbb{C} P^{2}\#(1+s)\overline{\mathbb{C} P^{2}}$ by Rokhlin's theorem. Also, if $0<s\leq 8$, then the $4$-manifold $X$ is an exotic copy of $\mathbb{C} P^{2}\#(1+s)\overline{\mathbb{C} P^{2}}$ by~\cite[Lemma $2$]{b1}.
\end{remark}


 It has been known that the minimal number of singular fibers in all torus Lefschetz fibrations is $12$. One can conclude that $N_1=M_1=12$ by the existence of torus Lefschetz fibrations with $12$ singular fibers on the elliptic surface $E(1)=\mathbb{C} P^{2}\#9\overline{\mathbb{C} P^{2}}$. Baykur and Korkmaz~\cite{bki} constructed a genus-$2$ Lefschetz fibration of type of $(8,6)$, so that its total space is an exotic copy of $\mathbb{C} P^{2}\#7\overline{\mathbb{C} P^{2}}$. By~\cite[Theorem $2$]{bki} one can conclude that it is the smallest simply-connected $4$-manifold which admits a genus-$2$ Lefschetz fibration. Thus, it follows immediately that 
 $M_2=N_2=14$. In the following lemma, we obtain the same result by using slightly different arguments than those
used in the proof of~\cite[Theorem $2$]{bki}.
 \begin{lemma}\label{lemn2}
$N_2=M_2=14$.
\end{lemma}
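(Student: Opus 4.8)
The plan is to prove $N_2 = M_2 = 14$ by establishing both a lower bound and an upper bound on the number of singular fibers. The upper bound $N_2 \leq 14$ and $M_2 \leq 14$ follows immediately from the existence of the Baykur--Korkmaz genus-$2$ Lefschetz fibration of type $(n,s)=(8,6)$ on an exotic copy of $\mathbb{C} P^{2}\#7\overline{\mathbb{C} P^{2}}$ recalled just above, whose total space is simply-connected and which is hyperelliptic (every genus-$2$ Lefschetz fibration is hyperelliptic, since $\Mod_2 = \HMod_2$); this realizes $8+6=14$ singular fibers. Thus the entire content of the lemma is the matching lower bound, and since $M_g \geq N_g$ trivially, it suffices to show $N_2 \geq 14$ for the genus-$2$ case, which by hyperellipticity is the same as $M_2 \geq 14$.

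First I would apply Lemma~\ref{lemm1} with $g=2$ to get $n \geq 4g = 8$, so every genus-$2$ Lefschetz fibration on a simply-connected $4$-manifold has at least $8$ non-separating vanishing cycles. Next I would invoke the divisibility constraint from Lemma~\ref{lem1}: for $g=2$ (even) we need $n + \sum_h 2h(4h+2)s_h \equiv 0 \pmod{2(2g+1)} = 0 \pmod{10}$. Since $g=2$ the only separating type is $h=1$, contributing $2\cdot 1\cdot 6 = 12$ per separating cycle, so the congruence reads $n + 12 s \equiv 0 \pmod{10}$, i.e. $n + 2s \equiv 0 \pmod{10}$. Combining this with $n \geq 8$, I would argue that the total number of singular fibers $n+s$ cannot be less than $14$: one examines the small candidate values. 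If $s=0$, then $n \equiv 0 \pmod{10}$ and $n\geq 8$ forces $n \geq 10$, but Remark~\ref{rmk} rules out $s=0$ for $g\geq 2$ hyperelliptic fibrations on simply-connected manifolds (it shows one must have $n+s > 4g$, and with $s=0$ the pair $(4g,0)=(8,0)$ fails the congruence, while $n=10$ would give $n+s=10<14$, which I must separately exclude).

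The delicate point — and I expect this to be the main obstacle — is ruling out the genuinely small pairs such as $(n,s)=(10,0)$, $(n,s)=(9,?)$, or more generally any combination with $n+s < 14$ that might satisfy both $n\geq 8$ and the congruence. Here I would use the signature inequality~(\ref{eq2}), $\sigma(X) \leq n - s - 4g = n-s-8$, together with the integrality of $\chi_h(X) = (1+b_2^+(X))/2 \geq 1$ derived in the proof of Lemma~\ref{lemm1}, to constrain $e(X)+\sigma(X) = 4-4g+n+s+\sigma(X)$ to be a positive multiple of $4$. Feeding in $e(X) = 4-8+n+s = n+s-4$ and the signature bound, I would show that the only way to make all the parity, divisibility, and positivity constraints consistent while keeping $n\geq 8$ is to have $n+s \geq 14$; in particular the apparently admissible $(10,0)$ must be eliminated either by the hyperelliptic signature formula of Lemma~\ref{lem2} (which gives $\sigma = -\tfrac{g+1}{2g+1}n = -\tfrac{3}{5}\cdot 10 = -6$, forcing a non-integer or inconsistent $b_2^+$) or by the topological obstruction that the resulting homeomorphism type admits no such fibration. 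Assembling the arithmetic from Lemmas~\ref{lemm1}, \ref{lem1}, \ref{lem2} and Remark~\ref{rmk} to close off every small case is the crux; once $n+s \geq 14$ is forced, combining with the upper bound completes the proof that $N_2 = M_2 = 14$.
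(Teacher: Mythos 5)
Your proposal is correct and follows essentially the same route as the paper: lower bound $n\geq 8$ from Lemma~\ref{lemm1}, the congruence $n+2s\equiv 0\pmod{10}$ from Lemma~\ref{lem1}, the hyperelliptic signature formula of Lemma~\ref{lem2} with inequality~(\ref{eq2}), and finally the integrality constraint $\chi_h(X)\geq 1$ to kill the surviving candidates. The paper simply completes the enumeration you sketch: the only pairs $(n,s)$ with $n+s<14$ passing the first three constraints are $(8,1)$ and $(10,0)$, and both give $\chi_h(X)=0$, exactly as your plan predicts.
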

\begin{proof}
Suppose that there exists a genus-$2$ Lefschetz fibration on a simply-connected $4$-manifold $X$ with $n+s< 14$. Note that the Lefschetz fibration may have only type-$1$
separating vanishing cycles, $s=s_1$. By lemmata~\ref{lem2} and ~\ref{lem1} together with the inequality (\ref{eq2}), we get:
\begin{itemize}
\item$e(X)=n+s-4$,
\item$\sigma(X)=-\frac{1}{5}(3n+s)\leq n-s-8$ and 
\item$n+2s \equiv 0 \pmod{10}$.
\end{itemize}
We have also $n\geq8$ by Lemma~\ref{lemm1}. Thus, the possible values of $(n,s)$ are $(8,1)$ and $(10,0)$. On the other hand, by the proof of Lemma~\ref{lemm1}, the $4$-manifold $X$ must satisfy $ \chi_{h}(X) \geq 1$.
However, both values $(8,1)$ and $(10,0)$ of $(n,s)$ have $ \chi_{h}(X)=0$, which leads a contradiction. Hence, we conclude that $N_2=M_2=14$.
\end{proof}

Baykur constructed a genus-$3$ hyperelliptic Lefschetz fibration on a $4$-manifold which is an exotic $\mathbb{C} P^{2}\#7\overline{\mathbb{C} P^{2}}$ with $(n,s)=(12,6)$~\cite{b1}. Therefore, using also Lemma~\ref{lemm1} and Remark~\ref{rmk}, we get $12\leq N_3\leq18$ and $13\leq M_3\leq18$. For the number $M_3$, we have the following lemma:
\begin{lemma}\label{lemn3}
$M_3=18.$
\end{lemma}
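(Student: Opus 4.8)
The plan is to prove $M_3=18$ by establishing the matching lower bound $M_3\geq 18$, since the genus-$3$ hyperelliptic fibration of Baykur with $(n,s)=(12,6)$ on an exotic $\mathbb{C} P^2\#7\overline{\mathbb{C} P^2}$ already gives $M_3\leq 18$. So the entire content of the lemma is to rule out every hyperelliptic configuration with $n+s\leq 17$. I would proceed exactly as in the proof of Lemma~\ref{lemn2}: assume for contradiction that a genus-$3$ hyperelliptic Lefschetz fibration on a simply-connected $4$-manifold $X$ has $n+s\leq 17$, and derive a finite list of admissible pairs $(n,s)$ from the constraints, then eliminate each one.

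First I would assemble the three arithmetic/inequality constraints available for $g=3$. From Lemma~\ref{lemm1} we have $n\geq 4g=12$. Since $g=3$, the only separating vanishing cycles are of type $h=1$ (as $[g/2]=1$), so $s=s_1$. The divisibility relation of Lemma~\ref{lem1} with $g=3$ (odd) reads $n+2\cdot 1\cdot 6\, s_1\equiv 0\pmod{4(2\cdot 3+1)}$, i.e.
\[
n+12\,s\equiv 0 \pmod{28}.
\]
The hyperelliptic signature formula of Lemma~\ref{lem2} gives $\sigma(X)=-\frac{4}{7}n+\big(\frac{8}{7}-1\big)s=\frac{1}{7}(-4n+s)$, and Cadavid's inequality (\ref{eq2}) forces $\sigma(X)\leq n-s-4g=n-s-12$. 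Finally the integrality argument from the proof of Lemma~\ref{lemm1} gives $\chi_h(X)=\frac{e(X)+\sigma(X)}{4}\geq 1$, where $e(X)=4-4g+n+s=n+s-8$.

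The next step is to intersect these constraints on the range $12\leq n$ and $n+s\leq 17$. The congruence $n+12s\equiv 0\pmod{28}$ is the sharpest filter: with $s=0$ it needs $n\equiv 0\pmod{28}$, impossible for $12\leq n\leq 17$; with $s=1$ it needs $n\equiv 16\pmod{28}$, giving $n=16$ but then $n+s=17$; with $s=2$ it needs $n\equiv 4\pmod{28}$, impossible in range; larger $s$ pushes $n$ below $12$ or $n+s$ above $17$. So the only surviving candidate is $(n,s)=(16,1)$. I would then test this single pair against $\chi_h(X)\geq 1$: compute $\sigma(X)=\frac{1}{7}(-64+1)=-9$ and $e(X)=16+1-8=9$, so $\chi_h(X)=\frac{9+(-9)}{4}=0<1$, the contradiction. (One should also confirm $\sigma\leq n-s-12$ holds here, namely $-9\leq 3$, so no candidate is lost by that inequality.) This eliminates the last case and yields $M_3\geq 18$.

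The main obstacle is purely bookkeeping rather than conceptual: correctly specializing the general formulas of Lemmata~\ref{lem2} and~\ref{lem1} to $g=3$ (getting the moduli $28$ and the coefficients $-\frac{4}{7},\frac{1}{7}$ right) and then verifying that the congruence really does cut the candidate list down to the single pair $(16,1)$ within the window $12\leq n\leq 17$. The delicate point is that unlike the $g=2$ case—where two candidates both gave $\chi_h=0$—here the divisibility is strong enough that essentially only one pair survives, so the argument hinges on not overlooking any solution of $n+12s\equiv 0\pmod{28}$ in range; I would enumerate $s=0,1,2$ explicitly to be safe. Once that single pair is dispatched by the $\chi_h\geq 1$ integrality obstruction, the lower bound $M_3\geq 18$ combines with Baykur's construction to give $M_3=18$.
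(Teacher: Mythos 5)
Your proof is correct and follows essentially the same route as the paper: combine $n\geq 12$, the congruence $n+12s\equiv 0\pmod{28}$, and the hyperelliptic signature formula to reduce to the single candidate $(n,s)=(16,1)$, which is then killed by the integrality constraint $\chi_h(X)\geq 1$. Your specialization $\sigma(X)=\tfrac{1}{7}(-4n+s)$ is in fact the correct one (the paper's displayed $\tfrac{1}{5}(4n-s)$ is a typo), and your explicit enumeration of the congruence solutions is slightly more careful than the paper's.
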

\begin{proof} We shall use a similar proof as in Lemma~\ref{lemn2}.
Suppose that there exists a genus-$3$ hyperelliptic Lefschetz fibration on a simply-connected
$4$-manifold $X$ with $n+s <18$. The $4$-manifold $X$ may admit only separating vanishing cycles of type-$1$, so $s=s_1$. It follows from Lemmata~\ref{lem1} and ~\ref{lem2} and the inequality~\ref{eq2} that we have:
\begin{itemize}
\item$e(X)=n+s-8$,
\item$\sigma(X)=\frac{1}{5}(4n-s)\leq n-s-12$ and 
\item$n+12s \equiv 0 \pmod{28}$.
\end{itemize}
 Also, Lemma~\ref{lem1} implies that $n\geq12$. Thus, we have  only one possible value $(16,1)$ of $(n,s)$. On the other hand, by the proof of Lemma~\ref{lemm1}, the $4$-manifold $X$ must have $ \chi_{h}(X)\geq 1$.  However, the pair $(16,1)$ satisfies $ \chi_{h}(X)=0$, which gives a contradiction. Hence, $M_3=18$.
\end{proof}
We have constructed the nonhyperelliptic Lefschetz fibration $(X_1,f_1)$ of genus-$4$ with $23$ singular fibers whose total space is an exotic copy of $\mathbb{C} P^{2}\#8\overline{\mathbb{C} P^{2}}$ (see Theorem~\ref{thmex4}). We thus get $16\leq\;N_4\;\leq23$. Similarly, the genus-$4$ hyperelliptic Lefschetz fibration $(X_2,f_2)$ with $24$ singular fibers whose the total space is an exotic $\mathbb{C} P^{2}\#9\overline{\mathbb{C} P^{2}}$ (see Theorem~\ref{thmexh4}) gives an upper bound for the number $M_4$. Therefore, using the lower bound mentioned in Remark~\ref{rmk}, we get $17\leq M_4 \leq24$. The following lemma gives a better lower bound for the number $M_4$.
\begin{lemma}\label{lemm4}
$21\leq M_4\leq 24$.\end{lemma}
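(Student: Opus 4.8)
The plan is to prove the lower bound $M_4 \geq 21$ by the same contradiction strategy used in Lemmata~\ref{lemn2} and \ref{lemn3}, enumerating the finitely many admissible pairs $(n,s)$ and eliminating each via the integrality constraint $\chi_h(X)\geq 1$. First I would suppose there exists a genus-$4$ hyperelliptic Lefschetz fibration on a simply-connected $4$-manifold $X$ with $n+s < 21$. Since $g=4$, there are now two types of separating vanishing cycles to track, so I would write $s=s_1+s_2$ and assemble the three governing constraints: the signature formula of Lemma~\ref{lem2}, which for $g=4$ reads $\sigma(X)=\frac{1}{9}(-5n+3s_1+7s_2)$; the divisibility condition of Lemma~\ref{lem1}, which for $g=4$ (even, so modulus $2(2g+1)=18$) becomes $n+12s_1+56s_2\equiv 0 \pmod{18}$; and Cadavid's inequality (\ref{eq2}), $\sigma(X)\leq n-s-16$. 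Combined with the lower bound $n\geq 16$ from Lemma~\ref{lemm1} and the Euler-characteristic expression $e(X)=n+s-12$, these pin down a short list of candidate pairs.

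The key computational step is the enumeration. With $n\geq 16$ and $n+s_1+s_2 < 21$, the non-separating count $n$ can only be $16,17,18,19$ or $20$, and the separating budget $s_1+s_2$ is at most $4$; I would run through these, imposing the divisibility relation $n+12s_1+56s_2\equiv 0\pmod{18}$ (equivalently $n+12s_1+2s_2\equiv 0\pmod{18}$, since $56\equiv 2$) to discard most combinations immediately, and then applying the signature inequality to cut further. For each surviving pair I would compute $\chi_h(X)=\frac{e(X)+\sigma(X)}{4}$ and check whether it is an integer that is at least $1$; by the argument in the proof of Lemma~\ref{lemm1}, any genuine simply-connected total space forces $\chi_h(X)\geq 1$, so every candidate yielding $\chi_h(X)\leq 0$ (or a non-integer) is eliminated. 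Establishing that none of the finitely many admissible pairs with $n+s<21$ survives this filter gives the contradiction and hence $M_4\geq 21$. The matching upper bound $M_4\leq 24$ is already furnished by the hyperelliptic fibration $(X_2,f_2)$ of Theorem~\ref{thmexh4}.

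The main obstacle I anticipate is not any single hard idea but the bookkeeping: with two separating types $s_1,s_2$ the case analysis branches more than in the genus-$2$ and genus-$3$ lemmata, and I must be careful that the signature inequality and the divisibility congruence are applied with the correct $g=4$ constants, since a single arithmetic slip would either admit a spurious pair or wrongly exclude a real one. A subtle point to watch is whether a type-$2$ separating cycle ($s_2>0$) can ever be consistent with the integrality of $\chi_h$ together with $n+s<21$; I expect the divisibility constraint to force $s_2=0$ or very small values in the relevant range, but this should be verified rather than assumed. Once the pairs are correctly listed, the elimination via $\chi_h(X)\geq 1$ is mechanical and mirrors exactly the final step of Lemmata~\ref{lemn2} and \ref{lemn3}.
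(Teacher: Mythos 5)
Your strategy is the same as the paper's: combine the Euler characteristic $e=n+s-12$, the hyperelliptic signature formula of Lemma~\ref{lem2}, the divisibility constraint of Lemma~\ref{lem1}, the bound $n\geq 16$ from Lemma~\ref{lemm1}, and the integrality condition $\chi_h\geq 1$ to eliminate all candidate triples $(n,s_1,s_2)$ below the claimed threshold, with the upper bound supplied by $(X_2,f_2)$. The only structural difference is that you assume $n+s<21$ and kill everything, whereas the paper assumes $n+s<24$, lists six admissible triples, eliminates three by $\chi_h=0$, and observes that the three survivors all have $n+s\geq 21$; the two routes are logically equivalent and yours is, if anything, tidier.

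However, you committed exactly the arithmetic slip you warned yourself about. The coefficient of $s_2$ in Lemma~\ref{lem1} is $2h(4h+2)$ evaluated at $h=2$, which is $2\cdot 2\cdot 10=40$, not $56$; so the congruence is $n+12s_1+40s_2\equiv 0\pmod{18}$, i.e.\ $n+12s_1+4s_2\equiv 0\pmod{18}$ as in the paper, not $n+12s_1+2s_2\equiv 0\pmod{18}$. With your version the genuine candidate $(n,s_1,s_2)=(16,1,2)$ (which satisfies $16+12+8=36\equiv 0$) would be discarded at the congruence stage for the wrong reason, while spurious triples such as $(16,0,1)$ and $(16,3,1)$ would enter your list. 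As it happens, the conclusion survives: the spurious triples give non-integral $\sigma$ (hence non-integral $\chi_h$) and are removed by your integrality filter, and $(16,1,2)$ is in any case killed by $\chi_h=0$. With the corrected congruence the only admissible triples with $n\geq 16$ and $n+s<21$ are $(16,1,2)$ and $(18,0,0)$ (the latter is omitted from the paper's list but yields $\chi_h=-1$), both eliminated by $\chi_h\geq 1$, so your argument goes through once the coefficient is fixed.
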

\begin{proof}
Suppose that there exist a genus-$4$ hyperelliptic Lefschetz fibration on a simply-connected $4$-manifold $Y$ with $n+s< 24$, where $s=s_1+s_2$ .By Lemmata~\ref{lem1} and ~\ref{lem2}, we have the following:
\begin{itemize}
\item$e(Y)=n+s_1+s_2-12$,
\item$\sigma(Y)=\frac{1}{9}(-5n+3s_1+7s_2)$ and 
\item$n+12s_1+4s_2 \equiv 0 \pmod{18}$.
\end{itemize}
It follows from Lemma~\ref{lem1} that $n\geq16$. From these, we obtain the possible six values $(16,1,2)$, $(16,0,5)$, $(16,4,2)$, $(18,3,0)$, $(18,2,3)$ and $(20,1,1)$ of $(n,s_1,s_2)$. The decompositions $(16,1,2)$, $(18,3,0)$ and $(20,1,1)$ satisfy $\chi_h(Y)=0$, a contradiction. By considering the possible values $(16,0,5)$, $(16,4,2)$ and $(18,2,3)$ of $(n,s_1,s_2)$, it can be concluded that $21\leq M_4 \leq 24$. 
\end{proof}

As we have already proved above, $M_2=14$ and $M_3=18$ (Lemmata~\ref{lemn2} and \ref{lemn3}). Moreover, the total spaces of corresponding hyperelliptic Lefschetz fibrations are exotic copies of the rational surface $\mathbb{C} P^{2}\#7\overline{\mathbb{C} P^{2}}$. Since we know that $\mathbb{C} P^{2}\#7\overline{\mathbb{C} P^{2}}$ does not admit a genus-$g$ Lefschetz fibration for $g\geq2$ ~\cite[Lemma $2$]{b1}, the following question appears naturally:
\begin{question}
Does there exist a genus-$g$ Lefschetz fibration whose total space is an exotic copy of  $\mathbb{C} P^{2}\#7\overline{\mathbb{C} P^{2}}$ for $g\geq4$?
\end{question}

If there exist such a fibration, then  it admits $4g+6$ singular fibers with $(n,s)=(4g,6)$, which implies that $4g \leq N_g \leq 4g+6$. Moreover, if it is also hyperelliptic, one can conclude that $4g+1\leq M_g \leq 4g+6$. Since $M_g=4g+6$ holds for $g=2$ and $3$, it is also natural to ask the following question:
\begin{question}
Is it true that $M_g=4g+6$ for $g\geq4$?
\end{question}

\end{document}